\theoremstyle{plain}
\newtheorem{theorem}{Theorem}
\newtheorem{lemma}[theorem]{Lemma}
\newtheorem{proposition}[theorem]{Proposition}
\theoremstyle{definition}
\newtheorem{definition}[theorem]{Definition}
\theoremstyle{remark}
\newcommand{\R}{\mathbb R}
\newcommand{\N}{\mathbb N}
\newcommand{\der}{\mathrm{d}}
\newcommand{\eps}{\varepsilon}
\renewcommand{\phi}{\varphi}
\newcommand{\abs}[1]{\left\lvert #1 \right\rvert}
\newcommand{\aabs}[1]{\left\lVert #1 \right\rVert}
\newcommand{\sisus}{\operatorname{int}}
\renewcommand{\theta}{\vartheta}
\newcommand{\ip}[2]{\left\langle#1,#2\right\rangle}
\newcommand{\iip}[2]{\left(#1,#2\right)}
\newcommand{\Der}[1]{\frac{\der}{\der #1}}
\newcommand{\sd}{\der^s}
\DeclareMathOperator{\Lip}{Lip}
\newcommand{\dummy}{\,\cdot\,}
\newcommand{\pSM}{\partial(SM)}
\newcommand{\reflect}{\rho}
\newcommand{\reverse}{\mathcal{R}}
\def \vd{\overset{\smash{\tt{v}}}{\nabla}}
\def \hd{\overset{\smash{\tt{h}}}{\nabla}}
\def \ehd{\overset{\smash{\tt{h}}}{\overline{\nabla}}}
\def \vdiv{\overset{\smash{\tt{v}}}{\mbox{\rm div}}}
\def \hdiv{\overset{\smash{\tt{h}}}{\mbox{\rm div}}}
\newcommand{\norm}[1]{\aabs{#1}}%{\lVert #1 \rVert}
\newcommand{\vnu}{\ip{v}{\nu}}
\newcommand{\vgrad}{\vd}%{\operatorname{\overset{\tt{v}}{\nabla}}}
\newcommand{\hgrad}{\hd}%{\operatorname{\overset{\tt{h}}{\nabla}}}
\newcommand{\ehgrad}{\ehd}%{\operatorname{\overset{\tt{h}}{\overline{\nabla}}}}
\newcommand{\sdot}[1]{\vphantom{#1}\smash{\dot{#1}}}
\newcommand{\outerbdy}{{\mathscr E}}
\newcommand{\innerbdy}{{\mathscr R}}
\newcommand{\ntr}[1]{}
\title[Broken ray tensor tomography]{Broken ray tensor tomography\\with one reflecting obstacle}
\author{Joonas Ilmavirta}
\thanks{Department of Mathematics and Statistics, University of Jyv\"askyl\"a, Finland. \texttt{joonas.ilmavirta@jyu.fi}}
\author{Gabriel P. Paternain}
\thanks{Department of Pure Mathematics and Mathematical Statistics, University of Cambridge, UK. \texttt{g.p.paternain@dpmms.cam.ac.uk}}
\date{\today}
\begin{document}

\begin{abstract}
We show that a tensor field of any rank integrates to zero over all broken rays if and only if it is a symmetrized covariant derivative of a lower order tensor which satisfies a symmetry condition at the reflecting part of the boundary and vanishes on the rest. This is done in a geometry with non-positive sectional curvature and a strictly convex obstacle in any dimension. We give two proofs, both of which contain new features also in the absence of reflections. The result is new even for scalars in dimensions above two.
\end{abstract}

\keywords{Tensor tomography, broken rays, integral geometry, inverse problems}

\subjclass[2010]{44A12, 53A35, 58J90, 58C99}
%44A12  	Radon transform
%53A35  	Non-Euclidean differential geometry
%53A45  	Vector and tensor analysis
%53C22  	Geodesics [See also 58E10]
%53C65  	Integral geometry
%58A99  General theory of differentiable manifolds [See also 32Cxx]
%58J90  	Applications: Partial differential equations on manifolds
%58C99  	None of the above, but in this section: 	Calculus on manifolds; nonlinear operators

\maketitle

\section{Introduction}

\ntr{We have indicated all changes in the text with footnotes like this.}

We study the problem of unique determination of a tensor field from its integrals over all broken rays on a Riemannian manifold.
When broken geodesic rays are replaced with unbroken ones, this is a classical problem and we refer the reader to the review~\cite{PSU:review}.
Analogously to the X-ray transform, the broken ray transform of a function or a tensor field on a manifold is a function on the space of broken rays defined by integration over broken rays.
The broken rays have endpoints on a subset $\outerbdy\subset\partial M$ and reflect on $\innerbdy\subset\partial M$.
\ntr{Added two sentences to define the broken ray transform.}
We show injectivity up to natural gauge obstructions on a compact non-positively curved manifold with dimension $n\geq2$ with a strictly convex boundary and a strictly convex reflecting obstacle.
A tensor field~$f$ of order~$m$ has vanishing broken ray transform if and only if there is a tensor field~$h$ of order $m-1$ so that $f=\sd h$ (symmetrized covariant derivative of~$h$) and $h$ satisfies a reflection condition at the surface of the reflector.

This result for $n=2$ and $m=0$ was proved in~\cite{IS:brt}.
The results for tensor fields are new, as is the injectivity for scalar functions in dimension three and higher.
Under stringent assumptions on several reflecting\ntr{Fixed spelling.} obstacles in the Euclidean space, Eskin~\cite{E:brt} showed this result for $n=2$ and $m\in\{0,1\}$.
Sharafutdinov~\cite{S:tt-annulus} showed solenoidal injectivity for the X-ray transform for any~$m$ in an annulus with rotation invariant Riemannian metric.
If in addition to geodesics avoiding the obstacle one uses the broken rays that reflect on it, one ends up with a restriction on the gauge condition on the surface of the reflector.

Broken ray tomography of scalar fields has been studied more extensively; see~\cite{I:phd}.
The methods used to prove injectivity are explicit calculation in spherically symmetric geometries~\cite{dHI:Abel,I:disk}, using a reflection argument to reduce it to X-ray tomography~\cite{H:square,H:cube,I:refl}, and applying a Pestov identity~\cite{E:brt,IS:brt}.
Boundary determination for broken ray tomography with concave reflectors leads to weighted X-ray tomography on the boundary manifold~\cite{I:bdy-det}.

The broken ray transform for scalars and one-forms is related to inverse boundary value problems for PDEs~\cite{KS:Calderon,E:brt}.
The broken ray transform of two-tensors arises from the linearization of length of broken rays~\cite{IS:brt}.
X-ray transforms are ubiquitous in various inverse problems in analysis and geometry, often arising through linearization or special asymptotic solutions to PDEs.
When the underlying problem has only partial data, we expect that in many applications the X-ray transform is replaced with a broken ray transform with reflections at the inaccessible part of the boundary.

Perhaps the most important example of a three-dimensional object with a reflecting obstacle inside is the Earth.
Seismic waves reflect on the various interfaces, the most pronounced of them being the core--mantle boundary.
As reflections are an inevitable aspect of the physical setting, better understanding of broken ray tomography is a contribution to the theory of seismic imaging.

In broad terms, our approach is based on ideas first put forward by Guillemin and Kazhdan~\cite{GK:2,GK:n}.
We use energy estimates known as Pestov identities and methods related to Beurling transforms to obtain tensor tomography results as in~\cite{PSU:Beurling,LRS:CH}.
The main difference is two-fold:
the Pestov identity contains an additional boundary term
and
the integral function defined on the sphere bundle is not smooth a priori.
The first issue is due to the integral function not vanishing on the surface of the reflector and the second one due to non-smoothness of the broken geodesic flow and Jacobi fields when the ray hits the reflector tangentially.
Singular Jacobi fields have also been studied in other contexts; see e.g.~\cite{GGP:Morse}.
%We do not use the Beurling transform per se, but a method closely related to it.

We provide an alternative proof of our theorem using a different argument.
It is also based on the Pestov identity, but the identity is used in a very different way.
The first proof is based on applying the identity to each component of the integral function~$u$ of~$f$ separately and showing certain positivity due to negative curvature of both the manifold and the reflecting obstacle.
The resulting estimate will given an estimate for the behaviour of the tail of the spherical harmonic expansion and show that~$u$ has the correct order.
In the second proof we cut of the beginning of the spherical harmonic expansion of~$u$ and make use of the way differential operators and their commutators map between different spherical harmonic orders.
We apply the resulting estimate to this truncated~$u$ and conclude that it vanishes and thus the original~$u$ has the correct order.
\ntr{Added a description and comparison of the two methods.}
This argument also gives a concise proof of non-existence of trace-free conformal Killing tensors.

\subsection{Notation}

We review the notation needed to state our main results.
Let~$M$ be a Riemannian manifold with boundary.
We denote the unit sphere bundle by~$SM$.

We define the reversion map $\reverse\colon SM\to SM$ by $\reverse(x,v)=(x,-v)$ and the reflection map $\reflect\colon\pSM\to\pSM$ by $\reflect(x,v)=(x,v-2\ip{v}{\nu}\nu)$, where $\nu=\nu(x)$ is the outer unit normal to~$\partial M$ at~$x$.
Both~$\reverse$ and~$\reflect$ are involutions, and they commute on~$\pSM$.

A broken ray on a manifold with boundary is a geodesic which reflects at the boundary.
The reflections are defined so that the incoming and outgoing directions are related by~$\reflect$, which in dimension two amounts to saying that the angle of incidence equals the angle of reflection.
All our geodesics and broken rays have unit speed.

The integral of a symmetric covariant tensor field over a broken ray is defined in the usual way; see equation~\eqref{eq:tensor-SM}.
We denote the symmetrized covariant derivative of such a tensor field by~$\sd$.

\begin{definition}
\label{def:admissible}
Let $(M,g)$ be a smooth Riemannian manifold with smooth boundary so that~$\partial M$ is a disjoint union of relatively open sets~$\outerbdy$ and $\partial M\setminus\outerbdy\eqqcolon\innerbdy$.
The triplet $(M,g,\outerbdy)$ is called \emph{admissible} if the following hold:
\begin{enumerate}
\item
$M$ is compact.
\item
The boundary is strictly convex at~$\outerbdy$ and strictly concave at~$\innerbdy$ in the sense of the second fundamental form.
\item
The sectional curvature of $(M,g)$ is non-positive.
\item
There are $L>0$ and $a>0$ so that for any given point $(x,v)\in SM$ the broken ray starting at~$x$ in direction~$v$ reaches~$\outerbdy$ in time bounded by~$L$ and has at most one reflection at~$\innerbdy$ with $\abs{\ip{\nu}{\dot\gamma}}<a$.
\end{enumerate}
\end{definition}

The last condition implies that a broken ray can have at most one tangential reflection in a strong sense.
See~\cite[Remark 3]{IS:brt} for a discussion on this condition.

The simplest example is a simply connected non-positively curved manifold with strictly convex boundary and $\outerbdy=\partial M$.
Then there are no reflections and we are left with the usual tensor tomography problem.
However, even in this case our method of proof contains new ideas.

A more interesting example is obtained when one adds a reflecting obstacle to the simply connected non-positively curved manifold with strictly convex boundary.
If the obstacle is strictly convex and no broken ray hits it twice, the resulting manifold is admissible.
Even the case of a strictly convex obstacle in a Euclidean space is new.

\subsection{The main result}

Our main result is the following solenoidal injectivity theorem for the broken ray transform.

\begin{theorem}
\label{thm:tensor}
Let $(M,g,\outerbdy)$ be admissible in the sense of definition~\ref{def:admissible}.
Assume $n\coloneqq\dim(M)\geq2$.
Then the broken ray transform is solenoidally injective on tensor fields in the following sense:
a $C^2$-regular symmetric covariant tensor field~$f$ of order $m\geq0$ integrates to zero over all broken rays with endpoints on $\outerbdy$ if and only if $f=\sd h$ for a symmetric covariant tensor field~$h$ of order $m-1$ which satisfies $h=0$ at $\outerbdy$ and $h=h\circ\reflect$ at $\innerbdy$.
In particular, a scalar field ($m=0$) integrates to zero over all broken rays if and only if it vanishes identically.
\end{theorem}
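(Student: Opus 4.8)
The plan is to follow the Pestov-identity strategy of Guillemin--Kazhdan and its modern tensor-tomography incarnations, adapted to the broken-ray setting. The forward implication is elementary: along any broken ray~$\gamma$ the fundamental theorem of calculus gives $\Der{t}h_{\gamma(t)}(\dot\gamma,\dots,\dot\gamma)=(\sd h)_{\gamma(t)}(\dot\gamma,\dots,\dot\gamma)=f_{\gamma(t)}(\dot\gamma,\dots,\dot\gamma)$ on each geodesic segment, so the integral of~$f$ telescopes to the difference of the values of $x\mapsto h_x(\dot\gamma,\dots,\dot\gamma)$ at the two endpoints. The condition $h=0$ on~$\outerbdy$ kills these endpoint contributions, while $h=h\circ\reflect$ on~$\innerbdy$ guarantees that the two one-sided limits at each reflection agree, so nothing is picked up there and the total integral vanishes.

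For the hard direction I would introduce the integral function~$u$ on~$SM$, where $u(x,v)$ is the integral of $F(y,w)\coloneqq f_y(w,\dots,w)$ along the broken ray issued from~$(x,v)$ until it exits through~$\outerbdy$. This~$u$ solves a transport equation $Xu=-F$ for the geodesic vector field~$X$, vanishes on the outgoing boundary over~$\outerbdy$, and---because continuing a ray through a reflection does not change the remaining integral---satisfies the reflection symmetry $u=u\circ\reflect$ on~$\pSM$ over~$\innerbdy$. Vanishing of the transform forces $u=0$ on the incoming boundary over~$\outerbdy$ as well.

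I would then expand~$u$ in vertical spherical harmonics, $u=\sum_k u_k$, and aim to prove $u_k=0$ for every $k\geq m$; that is, $u$ is fibrewise a polynomial of degree at most $m-1$, matching the top degree of~$F$. The engine is a Pestov energy identity applied componentwise. Under the non-positive curvature hypothesis the curvature term carries a favourable sign, and the essential new ingredient is a boundary term supported on~$\innerbdy$ whose sign is governed by the second fundamental form there: strict concavity of~$\innerbdy$ (equivalently, strict convexity of the obstacle) is exactly what makes this term cooperate, so that the resulting estimate forces the tail of the harmonic expansion to vanish. Once $u$ is known to have degree at most $m-1$, the transport equation $Xu=-F$ lets me read off a symmetric tensor field~$h$ of order $m-1$ with $f=\sd h$, and the boundary behaviour of~$u$ translates directly into $h=0$ on~$\outerbdy$ and $h=h\circ\reflect$ on~$\innerbdy$.

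The main obstacle is twofold, precisely where the broken-ray problem departs from the classical one. First, $u$ is a priori only as regular as the broken geodesic flow permits: at a tangential reflection the flow and its Jacobi fields are singular, so~$u$ need not be smooth and the naive application of the Pestov identity---which integrates the vertical derivative $\vd u$ and its transport---is unjustified. Controlling this is where admissibility condition~(4) enters, guaranteeing at most one tangential reflection in a sufficiently strong sense that the singular set is negligible and the energy integrals converge; I expect the bulk of the technical effort, together with a regularization or approximation argument, to go into making the identity rigorous despite this loss of smoothness. Second, extracting the correct sign of the~$\innerbdy$ boundary term from the second fundamental form and combining it with the curvature term so that the whole estimate closes is the step where the geometric hypotheses are genuinely consumed.
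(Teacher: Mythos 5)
Your overall architecture coincides with the paper's first proof: the forward direction by the fundamental theorem of calculus on each geodesic segment with the endpoint and reflection conditions on~$h$, the integral function~$u$ with $Xu=-f$, the symmetry $u=u\circ\reflect$ over~$\innerbdy$ and vanishing over~$\outerbdy$, the regularity analysis driven by admissibility condition~(4) (uniform Jacobi field bounds, at most one almost-tangential reflection, plus a mollification argument to run the Pestov identity at Lipschitz regularity), and a componentwise Pestov identity whose new boundary term over~$\innerbdy$ is a quadratic form in the second fundamental form with the favourable sign coming from strict concavity. You have also correctly identified where the geometric hypotheses are consumed. The problem is the sentence ``the resulting estimate forces the tail of the harmonic expansion to vanish'': this is precisely the step your sketch does not justify, and it does not follow from the componentwise Pestov estimate alone. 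For fixed degree $k$, with both the curvature and the boundary term of the good sign, the identity yields only $\aabs{\vd Xu_k}^2\geq\aabs{X\vd u_k}^2+(n-1)\aabs{Xu_k}^2$, which after the standard spherical-harmonics bookkeeping is a one-step continuity estimate for the Beurling transform, $\aabs{X_-u_k}^2\leq C(k,n)\aabs{X_+u_k}^2$ with $C(k,n)=\frac{2k+n-1}{2k+n-3}>1$. Nothing vanishes at this stage, and such an inequality admits plenty of nonzero solutions.

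Two further ingredients, both absent from your proposal, are needed to close the argument. First, since $f$ has no vertical harmonics above degree~$m$, projecting the transport equation to degree $k+1>m$ gives $X_+u_k=-X_-u_{k+2}$, so the one-step estimates can be chained; one must then control the growing product of the constants $C$ (in the paper, $B(m_0,N,n)\leq\sqrt{1+4N/(2m_0+n-3)}$) and play it against the summability $\sum_k\aabs{X_+u_k}^2=\aabs{X_+u}^2<\infty$, which itself requires knowing $X_+u\in L^2(SM)$ from the Lipschitz regularity of~$u$. This contradiction argument yields only $X_+u_k=0$ for the relevant degrees, \emph{not} $u_k=0$. Second, to pass from $X_+u_k=0$ to $u_k=0$ one needs injectivity of~$X_+$ on functions of fixed degree vanishing on an open subset of the boundary --- equivalently, the non-existence of trace-free conformal Killing tensor fields with these boundary conditions. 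This is a genuinely separate theorem (via ellipticity of~$X_+$ and unique continuation as in Dairbekov--Sharafutdinov, or via the paper's commutator argument with $[X,\Delta]$ in Proposition~\ref{prop:killing1}), and it is unavoidable: the degree $m+1$ component is exactly the one whose vanishing hinges on it, as the chained estimates by themselves leave a conformal Killing tensor as a possible residue. Without these two mechanisms (or the paper's alternative route, which applies the Pestov identity once to the truncated function $w=\sum_{k\geq m}u_k$ and exploits how $[X,\Delta]$ maps between degrees), your plan stalls at the stage where the tail of the expansion must be killed.
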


The case with $n=2$ and $m=0$ was covered in~\cite{IS:brt}.
In Euclidean geometry uniqueness can be proven for $m=0$ using Helgason's support theorem using only lines that do not hit the obstacle.
For $m\geq1$ one may use Sharafutdinov's result~\cite{S:tt-annulus} around any ball and argue that if~$f$ integrates to zero over every geodesic avoiding the obstacle, then there is a lower order tensor field~$h$ defined outside the obstacle so that $f=\sd h$, and computation by hand can be used to find additional conditions at the surface of the reflector when broken rays are added to the data.
However, we are not aware of the broken ray tensor tomography result being explicitly stated in Euclidean geometry.

The reflection $\reflect\colon\pSM\to\pSM$ can be naturally extended to a map $\partial(TM)\to\partial(TM)$, giving a way to see $h\mapsto h\colon\reflect$ as a transformation of multilinear maps on~$T_xM$ with $x\in\innerbdy$.
This coincides with the point of view given by seeing a tensor field as a function of finite degree on the sphere bundle.
\ntr{Added a discussion of the reflection condition.}
The reflection condition on~$\innerbdy$ is vacuous for $m=1$ as~$h$ is scalar.
When $m=2$, it says that the one-form~$h$ is tangential to the boundary.
In general, the reflection condition can be seen as extendability:
if two copies of the manifold~$M$ are glued together at~$\innerbdy$, then a tensor field~$h$ on~$M$ becomes a tensor field on the doubled manifold if and only if it satisfies the reflection condition of theorem~\ref{thm:tensor}.
We do not employ the doubling method, but reflection arguments have been used successfully for broken ray tomography as mentioned above.

Consider the non-linear problem of determining a Riemannian manifold from the lengths of all broken rays.
As mentioned earlier, the linearized version of the problem is to recover a rank two tensor field from its integrals over all broken rays.
Both problems have a gauge freedom:
the non-linear problem is invariant under changes of coordinates and the linear one under addition of potentials (symmetrized derivatives of one-forms).
The one-forms can be regarded as an infinitesimal generator of the diffeomorphisms to change coordinates.
The boundary conditions on the one-form are as follows:
at~$\outerbdy$ it vanishes (the diffeomorphism fixes every point on~$\outerbdy$), and at~$\innerbdy$ it is tangential to the boundary (the diffeomorphism fixes the set~$\innerbdy$ but not necessarily every point on it).

\subsection{Structure of the article}

The necessary tools and concepts required for the proof of the theorem are given in section~\ref{sec:prelim-and-proofs}, and also the theorem is proven there.
The following sections are for providing proofs of the various lemmas needed in the proofs:
section~\ref{sec:reg} establishes regularity results, section~\ref{sec:pestov} the Pestov identity with boundary terms, and section~\ref{sec:X+-} some mapping properties of the operators $X_\pm$ defined below.
In section~\ref{sec:alt} we give an alternative proof of our result and give results on conformal Killing tensors in section~\ref{sec:killing}.

\section{Preliminaries and proofs of theorems}
\label{sec:prelim-and-proofs}

\subsection{Operators and decompositions on the sphere bundle}

We mostly follow the presentation of~\cite{PSU:Beurling} for the basic structure of the sphere bundle.

Let $(M,g)$ be a Riemannian manifold with unit sphere bundle $\pi\colon SM\to M$ and as always let~$X$ be the geodesic vector field.
It is well known that~$SM$ carries a canonical metric called the Sasaki metric.
If we let~$\mathcal V$ denote the vertical subbundle given by $\mathcal V=\ker(\der\pi)$, then there is an orthogonal splitting with respect to the Sasaki metric:
\begin{equation}
TSM
=
\R X\oplus {\mathcal H}\oplus {\mathcal V}.
\end{equation}
The subbundle~${\mathcal H}$ is called the horizontal subbundle.
Elements in $\mathcal H(x,v)$ and $\mathcal V(x,v)$ are canonically identified with elements in the codimension one subspace $\{v\}^{\perp}\subset T_{x}M$.
We shall use this identification freely below.

Given a smooth function $u\in C^{\infty}(SM)$ we can consider its gradient~$\nabla u$ with respect to the Sasaki metric.
Using the splitting
above we may write uniquely
\begin{equation}
\nabla u=((Xu)X,\hd u,  \vd u).
\end{equation}
The derivatives~$\hd u$ and~$\vd u$ are called horizontal and vertical gradients respectively. 

We shall denote by~$\mathcal Z$ the set of smooth functions $Z\colon SM\to TM$ such that $Z(x,v)\in T_{x}M$ and $\ip{Z(x,v)}{v}=0$ for all $(x,v)\in SM$.
With the identification mentioned above we see that $\hd u,\vd u\in \mathcal Z$.

The geodesic vector field~$X$ acts on~$\mathcal Z$ by
\begin{equation}
XZ(x,v)
=
\left.\frac{DZ(\phi_{t}(x,v))}{\der t}\right|_{t=0}
\end{equation}
where~$\phi_t$ is the geodesic flow and $Z\in\mathcal Z$.
Note that $Z(t)\coloneqq Z(\phi_{t}(x,v))$ is a vector field along the geodesic~$\gamma$ determined by $(x,v)$, so it makes sense to take its covariant derivative with respect to the Levi--Civita connection of~$M$.
Since $\ip{Z}{\dot{\gamma}}=0$ it follows that
$\ip{\frac{DZ}{dt}}{\dot{\gamma}}=0$ and hence $XZ\in \mathcal Z$.

Another way to describe the elements of~$\mathcal Z$ is a follows.
Consider the pull-back bundle $\pi^*TM\to SM$.
Let~$N$ denote the subbundle of~$\pi^*TM$ whose fiber over $(x,v)$ is given by $N_{(x,v)}=\{v\}^{\perp}$.
Then~$\mathcal Z$ coincides with the smooth sections of the bundle~$N$.
Observe that~$N$ carries a natural~$L^{2}$ inner product and with respect to this product the formal adjoints of $\vd\colon C^{\infty}(SM)\to\mathcal Z$ and $\hd\colon C^{\infty}(SM) \to \mathcal Z$ are denoted by $-\vdiv$ and $-\hdiv$ respectively.
Note that since~$X$ leaves invariant the volume form of the Sasaki metric we have $X^*=-X$ for both actions of~$X$ on~$C^{\infty}(SM)$ and~$\mathcal Z$.

Let $R(x,v)\colon\{v\}^{\perp}\to \{v\}^{\perp}$ be the operator determined by the Riemann curvature tensor $R$ by $R(x,v)w=R_{x}(w,v)v$ and let $n=\dim M$.
We will also make use of the total horizontal gradient $\ehd u(x,v)=vXu(x,v)+\hgrad u(x,v)\in T_xM$.

These operators satisfy the following commutator formulas:
%The following commutator formulas hold on $C^{\infty}(SM)$:
\begin{equation}
\begin{split}
[X,\vd]&=-\hd, \\ %\label{eq:commXV}\\
[X,\hd]&=R\vd, \\ %\label{eq:commXH}\\
\hdiv\vd-\vdiv\hd&=(n-1)X. %\label{eq:commdiv}
\end{split}
\end{equation}
Taking adjoints gives the following commutator formulas on~$\mathcal Z$:
\begin{equation}
\begin{split}
[X,\vdiv] &= -\hdiv, \\
[X,\hdiv] &= -\vdiv R.
\end{split}
\end{equation}
More commutator formulas may be derived from these, including~\cite[lemma 3.5]{PSU:Beurling}
\begin{equation}
\label{eq:[X,D]}
[X,\Delta]
=
2\vdiv\hgrad+(n-1)X,
\end{equation}
where~$\Delta$ is the vertical Laplacian to which we shall return shortly.
\ntr{Reworded.}

The boundary of the sphere bundle is the disjoint union $\pSM=\partial_+(SM)\cup\partial_-(SM)\cup\partial_0(SM)$, where
\begin{equation}
\begin{split}
\partial_\pm SM
&=
\{(x,v)\in SM;\pm\vnu>0\}
\quad\text{and}
\\
\partial_0 SM
&=
\{(x,v)\in SM;\vnu=0\}.
\end{split}
\end{equation}
Here~$\nu$ is the outer unit normal to the boundary, so~$\partial_-(SM)$ is the inward-pointing boundary.

We will need the integration\ntr{Fixed spelling.} by parts formulas
\begin{equation}
\begin{split}
\iip{\vd u}{Z}&=-\iip{u}{\vdiv Z}, \\ %\label{eq:commXV}\\
\iip{Xu}{w}&=-\iip{u}{Xw}+\iip{\vnu u}{w}_{\pSM}, \quad\text{and} \\ %\label{eq:commXH}\\
\iip{XZ}{W}&=-\iip{Z}{XW}+\iip{\vnu Z}{W}_{\pSM} %\label{eq:commdiv}
\end{split}
\end{equation}
for $u,w\in C^{\infty}(SM)$ and $Z,W\in \mathcal Z$.
The convention is as follows:
when there is no subscript the norms and inner products are in~$L^{2}(SM)$, and the ones for~$L^2(\pSM)$ are marked.

On every fiber we may decompose a function on~$S_xM$ into the eigenspaces of the vertical Laplacian $\Delta=-\vdiv\vgrad$:
\begin{equation}
L^2(S_xM)
=
\bigoplus_{k=0}^\infty \Lambda^k_x,
\end{equation}
where
\begin{equation}
\Lambda^k_x
=
\{u\colon S_xM\to\R;\Delta u=k(k+n-2)u\}.
\end{equation}
This gives rise to a decomposition on the whole sphere bundle:
\begin{equation}
L^2(SM)
=
\bigoplus_{k=0}^\infty \Lambda^k,
\end{equation}
where $\Lambda^k$ is the set of functions $u\in L^2(SM)$ for which $u=\tilde u$ almost everywhere and for every $x\in M$ the function $\tilde u$ satisfies $\tilde u(x,\dummy)\in\Lambda^k_x$.
Functions in~$\Lambda^k\subset L^2(SM)$ are referred to as functions of degree~$k$.
A function in~$L^2(SM)$ is said to have finite degree if it only contains components in finitely many of the spaces~$\Lambda^k$.
Using the eigenvalue property shows that
\begin{equation}
\label{eq:vgrad-uk}
\aabs{\vgrad u}^2
=
k(k+n-2)\aabs{u}^2
\end{equation}
for a function~$u$ of degree~$k$.

The geodesic vector field~$X$ may be decomposed as $X=X_++X_-$, where~$X_\pm$ maps functions of degree~$k$ to functions of degree $k\pm1$.
Establishing mapping properties for~$X_\pm$ is a crucial ingredient in our proof.
This decomposition is due to~\cite{GK:n}.

A symmetric covariant tensor field $f$ of order $m\geq0$ can be regarded as a function~$\tilde f$ on~$SM$ by letting
\begin{equation}
\label{eq:tensor-SM}
\tilde f(x,v)
=
f_x(v,\dots,v).
\end{equation}
We will freely identify~$f$ and~$\tilde f$.
The function on the sphere bundle corresponding to a tensor field of order $m$ contains only degrees~$m$, $m-2$, $m-4$, \dots, and any of these different degree components may vanish.
For example, the metric tensor has rank~$2$, but the corresponding function on the sphere bundle is constant and is therefore of degree~$0$.

The most important operator for symmetric covariant tensor fields is the symmetrized covariant derivative~$\sd$, which appears in the gauge condition for tensor tomography.
When the tensor fields are identified with functions on~$SM$, the derivative~$\sd$ becomes the geodesic vector field~$X$.
Checking this is a straightforward computation, and one can use geodesic normal coordinates at any point of interest to essentially reduce the problem to its Euclidean counterpart.
For more details, consult e.g.~\cite[Lemma 10.1]{DS:Killing}.

A piecewise~$C^1$ curve~$\gamma$ on~$M$ parametrized by arc length\ntr{Included arc length parametrization.} may be lifted to a curve~$\sigma$ on~$SM$ by $\sigma(t)=(\gamma(t),\dot\gamma(t))$.
The integral of a tensor field or any other function on~$SM$ over a geodesic or a broken ray is defined to be the integral over the lifted curve.

We will make use of the reversion operator $\reverse\colon SM\to SM$ defined by $\reverse(x,v)=(x,-v)$ and the reflection operator $\reflect\colon\pSM\to\pSM$ defined by $\reflect(x,v)=(x,v-2\vnu\nu)$.
We will denote the restriction of~$\reflect$ to~$S_xM$ for a fixed $x\in\partial M$ by~$\reflect_x$.
Decomposition of functions into even and odd parts with respect to~$\reverse$ and~$\reflect$ will be convenient.

\subsection{Proof of theorem~\ref{thm:tensor}}

We will now prove theorem~\ref{thm:tensor}.
The lemmas of this section will be proved later.

Let~$f$ be a tensor field which integrates to zero over all broken rays with endpoints on~$\outerbdy$.
We will show that it is of the desired form.
The converse statement follows by applying the fundamental theorem of calculus along every geodesic segment of any given broken ray.
We will consider~$f$ as a function $SM\to\R$ as explained above.

For $(x,v)\in SM$, we denote by $\gamma_{x,v}\colon[0,\tau(x,v)]\to M$ the broken ray starting at~$x$ in direction~$v$ so that $\gamma_{x,v}(\tau(x,v))\in\outerbdy$.
%$\dot\gamma_{x,v}(\tau(x,v))\in\partial_+SM\cup\partial_0SM$.
\ntr{Rewrote the sentence so that we integrate over broken rays instead of geodesics.}
We define a function $u\colon SM\to\R$ by
\begin{equation}
\label{eq:u-def}
u(x,v)
=
\int_0^{\tau(x,v)}f(\gamma_{x,v}(t),\dot\gamma_{x,v}(t))\der t.
\end{equation}
It follows from the admissibility assumption that $\tau\leq L$, and therefore~$u$ is pointwise well defined.
We first need to establish some regularity for~$u$.
It is immediate that~$u$ is differentiable along the geodesic flow and $Xu=-f$.

\begin{lemma}
\label{lma:u-reg}
Suppose $f\in C^2(SM)$ and define~$u$ by equation~\eqref{eq:u-def}.
If~$M$ is admissible and~$u$ vanishes at~$\outerbdy$, then $u\in C^2(\sisus SM)\cap\Lip(SM)$.
In addition, $u=u\circ\reflect$ at $\innerbdy$.
\end{lemma}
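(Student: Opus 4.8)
The plan is to establish the three assertions—interior $C^2$ regularity, the global Lipschitz bound, and the reflection identity—by separate arguments whose common thread is that the hypothesis $u|_{\outerbdy}=0$ (equivalently, that $f$ integrates to zero over every broken ray) is exactly what converts the singular behaviour of the broken flow at tangential reflections into genuine regularity of $u$. The reflection identity itself is immediate: for $x\in\innerbdy$ and $v\in S_xM$ the broken rays $\gamma_{x,v}$ and $\gamma_{x,\reflect v}$ trace the same curve on $M$, since if $v$ points into $M$ they agree from the outset, while if $v$ points toward the obstacle then $\gamma_{x,\reflect v}$ starts with the instantaneous reflection $\reflect v\mapsto v$ and thereafter coincides with $\gamma_{x,v}$ (the tangential case is trivial, as then $\reflect v=v$). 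Since the reflection consumes no arc length, $u(x,v)=u(x,\reflect v)$.

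For the interior the obstacle is the one flagged in the introduction: the broken flow and its Jacobi fields degenerate when a ray meets $\innerbdy$ tangentially. I would bypass this using admissibility, which permits at most one tangential reflection along any broken ray, so that for each interior $(x,v)$ the grazing, if present, lies strictly to one side of the base point. Let $\mathcal C\subset SM$ be the open set of points whose forward broken ray has only transversal reflections. On $\mathcal C$ these reflections are locally finite and depend smoothly on $(x,v)$, strict convexity of $\outerbdy$ forces every interior ray to exit transversally, and non-positive curvature rules out conjugate points; hence the flow is smooth with controlled Jacobi fields, and differentiating under the integral sign in~\eqref{eq:u-def} yields $u\in C^2$ on $\mathcal C$. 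The vanishing hypothesis now enters through the identity $u=(-1)^{m+1}\,u\circ\reverse$, which holds because the integral of $f$ over the entire broken ray through $(x,v)$ is zero. Since admissibility guarantees $\sisus SM\subseteq\mathcal C\cup\reverse(\mathcal C)$, on $\reverse(\mathcal C)$ I would evaluate $u$ via this identity using the clean backward direction; the two representations agree on overlaps, giving $u\in C^2(\sisus SM)$ across the grazing set.

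The global Lipschitz bound is the most technical step, the delicate locus being the glancing set $\partial_0(SM)$, where $\tau$ is only H\"older-$\tfrac12$ and its differential is unbounded. I would first observe that $u|_{\outerbdy}=0$ forces $f$ to vanish on $\partial_0(SM)$: integrating $f$ over the short broken rays that collapse onto a glancing point and isolating the leading square-root term shows that the boundary value of $f$ in the tangent direction must be zero. The first variation of $u$ then consists of a boundary term $(\der\tau)\,f(\gamma_{x,v}(\tau),\dot\gamma_{x,v}(\tau))$, in which the singular factor $\der\tau$ multiplies a value of $f$ decaying at the matching rate so that the product stays bounded, together with an interior term controlled by $\int_0^{\tau}\abs{\der f}$ along the ray. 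Uniformity of the latter comes from the quantitative part of admissibility: once the unique near-tangential reflection has been discarded together with the bad direction, every reflection on the clean direction obeys $\abs{\vnu}\ge a$, so the reflection differentials are bounded by a constant depending only on $a$. Combined with the reflection identity at $\innerbdy$, this bounds $\der u$ uniformly up to the boundary. The hardest point to make rigorous is precisely this uniformity: one must check that the two cancellation mechanisms—switching to the clean direction at the obstacle and the vanishing of $f$ on $\partial_0(SM)$—cooperate to control genuine difference quotients, not merely pointwise derivatives, as $(x,v)$ approaches the grazing and glancing sets simultaneously.
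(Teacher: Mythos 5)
Your proposal follows essentially the same route as the paper's proof: the reflection identity at~$\innerbdy$ by construction, interior $C^2$ regularity by switching to the reversed (clean) direction via the vanishing of the integral over the full broken ray through $(x,v)$ (admissibility allowing at most one near-tangential reflection), and the Lipschitz bound by pairing the $1/\abs{\vnu}$ blow-up of $\der\tau$ at glancing exit points of~$\outerbdy$ with the vanishing of~$f$ in tangential directions there at the Lipschitz-matching rate $\abs{f}\lesssim\abs{\vnu}$, together with uniform Jacobi-field bounds along rays whose reflections all satisfy $\abs{\vnu}\geq a$. The one repair needed is your parity identity $u=(-1)^{m+1}u\circ\reverse$: the lemma concerns a general $f\in C^2(SM)$ with no fixed parity (no~$m$ appears in its hypotheses), so you must first split $f=f_++f_-$ under reversal, observe that each part still integrates to zero over broken rays and that the integral function of~$f_\pm$ is~$u_\mp$, and then run your argument for each parity class separately --- exactly the first step of the paper's proof.
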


\begin{lemma}
\label{lma:uk-reg}
In the setting of lemma~\ref{lma:u-reg}, let $u=\sum_ku_k$ be the spherical harmonic decomposition of~$u$.
Then $u_k\in C^2(\sisus SM)\cap\Lip(SM)$ and $u_k=u_k\circ\reflect$ at~$\innerbdy$ for every $k\in\N$.
\end{lemma}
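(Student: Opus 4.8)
The statement to establish has two parts: the regularity $u_k\in C^2(\sisus SM)\cap\Lip(SM)$ and the reflection identity $u_k=u_k\circ\reflect$ at~$\innerbdy$. The principle underlying both is that the $L^2$-orthogonal projection $P_k\colon L^2(SM)\to\Lambda^k$ acts fiberwise: for each fixed $x\in M$ it is the projection of $u(x,\dummy)\in L^2(S_xM)$ onto the eigenspace $\Lambda^k_x$ of the fiberwise Laplacian~$\Delta$. Since each fiber $S_xM$ is isometric to the round sphere $S^{n-1}$, this is the classical spherical harmonic projection, given by integration against the zonal reproducing kernel, a fixed smooth function of $\ip{v}{w}$ (a Gegenbauer polynomial). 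The plan is to exploit this structure for each claim in turn.

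For the regularity I would argue locally, since both $C^2(\sisus SM)$ and $\Lip(SM)$ are local conditions. Near any point of~$M$ I choose a smooth orthonormal frame $e_1,\dots,e_n$ of~$TM$, which trivializes the sphere bundle by writing $v=\sum_i\omega_ie_i(x)$ with $\omega\in S^{n-1}$ and identifies~$u$ with a function $\tilde u(x,\omega)$ of the same regularity. In this trivialization $P_k$ becomes the fixed projection on $S^{n-1}$, namely
\[
u_k(x,v)=\int_{S^{n-1}}K_k(\omega,\omega')\,\tilde u(x,\omega')\,\der\omega',
\]
with $K_k$ smooth and independent of~$x$. Differentiation in~$x$ passes through the integral and so costs only the regularity of~$u$ in the base directions, while differentiation in~$\omega$ lands on the smooth kernel~$K_k$; hence $\tilde u\in C^2$ yields $u_k\in C^2$ in the interior, and $\tilde u\in\Lip$ up to the boundary yields $u_k\in\Lip$ up to the boundary, the difference quotients being controlled by the Lipschitz constant of~$u$ and the bounded kernel. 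Transferring back through the smooth trivialization gives $u_k\in C^2(\sisus SM)\cap\Lip(SM)$.

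For the reflection identity I would use that for $x\in\innerbdy$ the map $\reflect_x\colon S_xM\to S_xM$, $v\mapsto v-2\vnu\nu$, is the restriction of an orthogonal reflection of $T_xM$ and hence an isometry of the round fiber. Isometries commute with the fiberwise Laplacian~$\Delta$, so $\reflect_x^*$ preserves each eigenspace $\Lambda^k_x$ and therefore commutes with~$P_k$. Consequently $(u\circ\reflect)_k=u_k\circ\reflect$ on~$\innerbdy$: for fixed $x\in\innerbdy$,
\[
P_k\bigl(u(x,\dummy)\circ\reflect_x\bigr)=\bigl(P_ku(x,\dummy)\bigr)\circ\reflect_x=u_k(x,\dummy)\circ\reflect_x.
\]
Since lemma~\ref{lma:u-reg} gives $u=u\circ\reflect$ at~$\innerbdy$, applying~$P_k$ to both sides yields $u_k=(u\circ\reflect)_k=u_k\circ\reflect$ at~$\innerbdy$.

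I expect the main obstacle to be the regularity claim rather than the reflection identity. The subtlety is that horizontal derivatives do not literally commute with~$P_k$, because the identification of neighbouring fibers varies with~$x$; the smooth local trivialization sidesteps this by turning the projection into an $x$-independent integral operator, at the cost of having to check that the commutation of base derivatives with the fiber integral is valid up to the boundary for the Lipschitz statement. One should also confirm that the zonal kernel $K_k$ is genuinely smooth and that the frame may be chosen smoothly, both of which are standard.
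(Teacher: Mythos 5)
Your proof is correct and takes essentially the same route as the paper, whose proof consists of the single observation that $u_k$ is obtained from $u$ by a fiberwise spherical harmonic projection and that this ``is easy to see'' to preserve $C^2$-regularity in the interior and Lipschitz regularity on $SM$. You have simply supplied the details the paper leaves implicit --- the local trivialization turning the projection into convolution with the smooth zonal kernel, differentiation under the integral, and the fact that $\reflect_x$ is a fiber isometry commuting with the projection, which handles the reflection identity the paper's proof does not even mention explicitly.
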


To gain better control of regularity, we need to understand the properties of~$X_\pm$.

\begin{lemma}
\label{lma:X+-u-L2}
Assume $(M,g,\outerbdy)$ is admissible.
Let~$f$ be a $C^2$-regular\ntr{Added regularity assumption.} tensor field of order~$m$ with vanishing broken ray transform and~$u$ as defined in~\eqref{eq:u-def}.
Then $X_+u,X_-u\in L^2(SM)$.
\end{lemma}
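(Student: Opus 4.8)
The plan is to first cash in the regularity of~$u$ and then reduce the claim to a summability statement about the spherical harmonic components of~$u$. From lemma~\ref{lma:u-reg} we have $u\in\Lip(SM)\subset H^1(SM)$, so that $Xu$, $\hd u$ and $\vd u$ all lie in $L^2(SM)$, with $Xu=-f$. Combining $Xu=-f\in C^2(SM)$ with the commutator relation $[X,\vd]=-\hd$, which holds pointwise on $\sisus SM$ where $u$ is $C^2$, I would record the additional datum
\begin{equation}
X\vd u=\vd Xu-\hd u=-\vd f-\hd u\in L^2(SM).
\end{equation}
Thus, beyond $Xu$, also $X\vd u$ is a genuine $L^2$ object; this is the only place where the full strength of the regularity lemma is used.

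Next I would reduce to series estimates. Since $X_\pm$ raises or lowers the degree by exactly one, the functions $X_+u_k$ (respectively $X_-u_k$) lie in the mutually orthogonal spaces $\Lambda^{k+1}$ (respectively $\Lambda^{k-1}$), so that $\aabs{X_+u}^2=\sum_k\aabs{X_+u_k}^2$ and $\aabs{X_-u}^2=\sum_k\aabs{X_-u_k}^2$ with no cross terms; the claim is precisely that these two series converge. The obstruction to reading this off from $\aabs{Xu}^2=\aabs{f}^2$ is that $Xu=X_+u+X_-u$ mixes the two series, $\aabs{Xu}^2=\aabs{X_+u}^2+\aabs{X_-u}^2+2\iip{X_+u}{X_-u}$, and the cross-correlation $\iip{X_+u}{X_-u}=\sum_k\iip{X_+u_k}{X_-u_{k+2}}$ is not a priori finite. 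Every purely algebraic manipulation — integration by parts, the adjoint relations $X_\pm^\ast=-X_\mp$, and substitution of $X_+u_k+X_-u_{k+2}=-f_{k+1}$ — only reproduces difference-type identities and never controls the individual series, so a genuinely coercive estimate is needed.

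That estimate I would draw from the Pestov identity of section~\ref{sec:pestov}, applied either to each component~$u_k$ (which by lemma~\ref{lma:uk-reg} is $C^2$ in the interior, Lipschitz, and satisfies $u_k=u_k\circ\reflect$ on~$\innerbdy$) or to the truncations $u^N=\sum_{k\le N}u_k$. Using~\eqref{eq:vgrad-uk} one has $\aabs{\vd Xu_k}^2=(k+1)(k+n-1)\aabs{X_+u_k}^2+(k-1)(k+n-3)\aabs{X_-u_k}^2$, so the degree weights grow like~$k^2$; non-positive sectional curvature forces $\iip{R\vd u_k}{\vd u_k}\le0$, and the boundary term — absent in the closed case — is the place where strict concavity of the reflector at~$\innerbdy$, the reflection symmetry $u_k=u_k\circ\reflect$, and the vanishing of~$u$ on~$\outerbdy$ must combine to give the favourable sign. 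Feeding in the finite quantities $\aabs{X\vd u}^2$, $\aabs{\vd f}^2$, $\aabs{f}^2$ and $\aabs{\vd u}^2$ and letting $N\to\infty$, I expect to obtain a bound on $\aabs{X_\pm u^N}$ uniform in~$N$, whence $X_\pm u\in L^2(SM)$ by weak compactness.

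The main obstacle is exactly the pair of difficulties highlighted in the introduction, now entangled. First, the boundary contribution in the Pestov identity has to be shown to have a controllable sign; this is the new geometric input, it rests on strict concavity at~$\innerbdy$ together with the reflection condition, and it is what prevents the cross-correlation above from being destructive. Second, $u$ is only globally Lipschitz — its second derivatives and the broken geodesic flow degenerate where a ray meets the reflector tangentially, that is, near $\partial_0(SM)$ — so neither the Pestov identity nor the integration by parts underlying it may be applied naively. I would therefore justify everything by exhausting~$\sisus SM$ with smooth domains, or by approximating~$f$, and verify that the contributions of the non-smooth set are negligible in the limit. Handling these two effects simultaneously is the crux; the spectral bookkeeping of the preceding paragraph is then comparatively routine.
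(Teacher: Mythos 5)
Your opening step is exactly where the paper's proof begins and ends: lemma~\ref{lma:u-reg} gives $u\in\Lip(SM)\subset H^1(SM)$, and from there the paper simply invokes lemma~\ref{lma:horizontal-L2} (quoted from \cite{LRS:CH}, Lemma~5.1, in turn based on \cite{PSU:Beurling}), which asserts that on \emph{any} complete manifold, with or without boundary and with no curvature or convexity hypothesis whatsoever, every $u\in H^1(SM)$ satisfies
\begin{equation}
\aabs{X_+u}^2+\aabs{X_-u}^2
\leq
\aabs{Xu}^2+\aabs{\hgrad u}^2
=
\aabs{\ehgrad u}^2.
\end{equation}
So your diagnosis that the cross term $\iip{X_+u}{X_-u}$ is uncontrolled by $\aabs{Xu}^2$ alone is correct, but your conclusion that ``a genuinely coercive estimate is needed'' from the Pestov identity is a misdiagnosis: the cross term is absorbed by the additional $\aabs{\hgrad u}^2$ through elementary fiberwise commutator identities, and none of the geometric inputs you marshal (non-positive curvature, strict concavity at~$\innerbdy$, the reflection condition) play any role. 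Given $H^1$ regularity, the lemma is a two-line consequence of a general mapping property of~$X_\pm$.

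Your substitute route contains a genuine gap and would not close. The per-component Pestov estimate yields (as in lemma~\ref{lma:X-<X+}) $(2k+n-1)\aabs{X_+u_k}^2\geq\aabs{\hgrad u_k}^2+(2k+n-3)\aabs{X_-u_k}^2$, an inequality at fixed~$k$ which, combined with $X_+u_k=-X_-u_{k+2}$, transfers mass \emph{up} the degree scale — this is precisely the growth estimate~\eqref{eq:iterated-beurling}, and it points in the wrong direction for convergence of $\sum_k\aabs{X_\pm u_k}^2$. For the truncations $u^N$, the function $Xu^N$ acquires a top-degree term $X_+u_N$, so $\aabs{\vgrad Xu^N}^2$ contains $(N+1)(N+n-1)\aabs{X_+u_N}^2$, which is exactly the quantity you have no a priori uniform control over; moreover the Pestov identity with the favourable curvature and concavity signs gives a \emph{lower} bound for $\aabs{\vgrad Xu^N}^2$ (cf.~\eqref{eq:pestov-estimate}), not the upper bound your weak-compactness argument requires, so no bound uniform in~$N$ emerges. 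Finally, the route is structurally circular relative to the paper: the finiteness $\aabs{X_+u}^2=\sum_k\aabs{X_+u_k}^2<\infty$ is the independent input against which the iterated Beurling estimates produce the contradiction proving theorem~\ref{thm:tensor}, so it must be established upstream of — not derived from — the Pestov machinery, which is what lemma~\ref{lma:horizontal-L2} accomplishes.
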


\begin{lemma}
\label{lma:X+inj}
Assume $(M,g)$ is a smooth, compact, and connected Riemannian manifold with smooth boundary.
Suppose $u\in C^2(\sisus SM)\cap\Lip(SM)$ has degree $k\geq3$.
If $X_+u=0$ and~$u$ vanishes on a non-empty open subset of~$\partial M$, then $u=0$.
\end{lemma}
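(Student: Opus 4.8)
The plan is to read the hypothesis $X_+u=0$ as a conformal Killing equation and then run a unique continuation argument from the boundary. Since the geodesic vector field~$X$ implements the symmetrized covariant derivative~$\sd$ and~$u$ has pure degree~$k$ (so it corresponds to a trace-free symmetric $k$-tensor), the splitting $Xu=X_+u+X_-u$ separates $\sd u$ into its trace-free part $X_+u$ (degree $k+1$) and its trace part $X_-u$ (degree $k-1$). Thus $X_+u=0$ says precisely that~$u$ is a trace-free conformal Killing tensor, and $Xu=X_-u$ has degree $k-1$. The goal is then the statement that such a tensor, vanishing fibrewise over an open boundary patch $\Gamma\subset\partial M$, must vanish on all of~$SM$.

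The engine of the proof is that the conformal Killing system is of finite type. Differentiating $X_+u=0$ and using the commutator formulas $[X,\vd]=-\hd$, $[X,\hd]=R\vd$, $\hdiv\vd-\vdiv\hd=(n-1)X$ together with~\eqref{eq:[X,D]}, one finds on degree~$k$ the identities
\[
\vdiv\hd u=(k-1)\,X_-u,\qquad \hdiv\vd u=(k+n-2)\,X_-u,
\]
both using $X_+u=0$. These express the second-order mixed derivatives of~$u$ through the single degree-$(k-1)$ field $X_-u=Xu$; iterating, every derivative of~$u$ becomes an algebraic expression in~$u$, $X_-u$, and finitely many horizontal derivatives of these, so the prolonged object is a parallel section of a finite-rank bundle and is determined by its value at a single point. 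Consequently it suffices to show that the full jet of~$u$ vanishes at one point $x_0\in\Gamma$, after which connectedness of~$M$ forces $u\equiv0$.

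At such a point the boundary hypothesis handles part of the jet for free: because~$u$ vanishes on the entire fibre over every point of the open set~$\Gamma$, all vertical derivatives and all derivatives tangent to~$\partial M$ vanish at $(x_0,v)$ for every~$v$, so the tangential and vertical jet of~$u$ is zero. The remaining task is to kill the transversal derivatives, and this is where I expect the main difficulty. One differentiates $X_+u=0$ tangentially along~$\Gamma$ and solves, order by order, for the normal derivatives of~$u$ in terms of the already-vanishing tangential data; the coefficients that appear are the nonvanishing integers $k-1$, $k+n-2$, $k+n-1$ from the identities above, and it is precisely here that the hypothesis $k\geq3$ enters, ensuring that each step of the induction is nondegenerate and that the degrees $k$, $k-1$, $k-2$ entering the prolongation are all genuinely present. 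Equivalently, one shows that the Cauchy data of~$u$ on~$\Gamma$ vanish, so that extending~$u$ by zero across~$\Gamma$ produces a solution of the same elliptic system that vanishes on an open set.

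The second obstacle is regularity: the prolongation and jet arguments want~$u$ smooth, whereas only $u\in C^2(\sisus SM)\cap\Lip(SM)$ is assumed. In the interior this is harmless, since $u\mapsto X_+u$ is overdetermined elliptic on functions of fixed degree, so $X_+u=0$ bootstraps~$u$ to $C^\infty(\sisus SM)$; the delicate point is boundary regularity for the overdetermined system near~$\Gamma$, needed both to make sense of the boundary jet and to justify the extension-by-zero step. Once the full jet is shown to vanish at~$x_0$, the finite-type property together with connectedness of~$M$ gives $u=0$.
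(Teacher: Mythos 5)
You have correctly identified the mechanism: $X_+u=0$ for $u$ of pure degree $k$ says exactly that $u$ is a trace-free conformal Killing tensor, the interior smoothness follows from ellipticity of $X_+$, and your two identities $\vdiv\hd u=(k-1)X_-u$ and $\hdiv\vd u=(k+n-2)X_-u$ are correct (both follow from $[X,\Delta]u=(2k+n-3)X_-u$ together with $\hdiv\vd-\vdiv\hd=(n-1)X$). This matches the first half of the paper's proof. But the paper does not prove the unique continuation statement at all: it quotes it. The proof is two citations — ellipticity of $X_+$ from Guillemin--Kazhdan, and then Theorem 1.3 of Dairbekov--Sharafutdinov, which states precisely that a trace-free conformal Killing tensor vanishing on an open subset of the boundary vanishes identically. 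What you propose instead is to re-derive that theorem from scratch, and the two steps you yourself flag as ``the main difficulty'' and ``the second obstacle'' are exactly the content of that theorem; asserting them is not proving them.

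Concretely, there are two genuine gaps. First, the finite-type claim — that the prolonged object is a parallel section of a finite-rank bundle ``determined by its value at a single point'' — is false when $n=2$: there trace-free symmetric $k$-tensors correspond to sections of the $k$-th power of the canonical bundle and the conformal Killing equation becomes the Cauchy--Riemann equation, so the local solution space is infinite-dimensional and no pointwise determination is possible (one would instead need a boundary uniqueness theorem for holomorphic functions). Since the lemma is stated for general $(M,g)$ and the paper uses it for all $n\geq2$, your argument as written does not cover the two-dimensional case. Second, even for $n\geq3$, the order-by-order recovery of the normal derivatives of $u$ on $\Gamma$ from the vanishing tangential and vertical data is the hard analytic core: your two identities control only specific traces of the second derivatives (one scalar combination per degree), not the full transversal jet, and the required prolongation of the conformal Killing system involves curvature terms you have not tracked. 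Finally, the role you assign to $k\geq3$ is spurious — your coefficients $k-1$, $k+n-2$ are already nonzero for $k\geq2$, and the paper explicitly remarks that the restriction $k\geq3$ is a mere convenience (the cited Dairbekov--Sharafutdinov theorem holds for every rank); inventing a place where the hypothesis ``enters'' signals that the induction was not actually carried out. If you want a self-contained proof in the spirit of this paper, look instead at Propositions~\ref{prop:killing1} and~\ref{prop:killing2}, which avoid jet prolongation entirely and get $Xu=0$ from the Pestov identity via $\iip{Xu}{[X,\Delta]u}=(2k+n-3)\aabs{Xu}^2\leq0$, then propagate vanishing along geodesics reaching the boundary transversally.
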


Another version of lemma~\ref{lma:X+inj} is given in proposition~\ref{prop:killing1}.
The limit on degree in the lemma above is merely a matter of convenience; the result is only used for degrees~$3$ and higher.

\begin{lemma}
\label{lma:L2-reg}
Assume $(M,g,\outerbdy)$ is admissible.
Let~$f$ be a $C^2$-regular\ntr{Added regularity assumption.} tensor field of order~$m$ with vanishing broken ray transform and~$u$ as defined in~\eqref{eq:u-def}.
Then $\vgrad Xu\in L^2(SM)$ and $\vgrad Xu_k\in L^2(SM)$ for every $k\in\N$.
\end{lemma}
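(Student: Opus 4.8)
The goal is to show that $\vgrad Xu \in L^2(SM)$ and $\vgrad Xu_k \in L^2(SM)$ for each $k$. Since we know from the identity $Xu = -f$ that $Xu$ is simply $-f$, and $f$ is a $C^2$-regular tensor field, the full statement $\vgrad Xu \in L^2(SM)$ is essentially immediate: $f$ is smooth on all of $SM$ (it is a tensor field, hence a polynomial in $v$ on each fiber), so $\vgrad f$ is continuous and therefore bounded on the compact manifold $SM$, giving $\vgrad Xu = -\vgrad f \in L^2(SM)$. The genuine content of the lemma is therefore the componentwise statement $\vgrad Xu_k \in L^2(SM)$ for each degree~$k$.

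\smallskip

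The plan is to analyze $Xu_k$ by decomposing it into spherical harmonic components and using the mapping properties of $X_\pm$. Writing $X = X_+ + X_-$, we have $Xu_k = X_+u_k + X_-u_k$, where $X_+u_k$ has degree $k+1$ and $X_-u_k$ has degree $k-1$. The first step is to establish that each of $X_+u_k$ and $X_-u_k$ lies in $L^2(SM)$. For $X_-u_k$ one can use that $X_-$ (like $X_+$) is a first-order differential operator in the horizontal and vertical derivatives, and that $u_k \in C^2(\sisus SM)\cap\Lip(SM)$ by Lemma~\ref{lma:uk-reg}; together with Lemma~\ref{lma:X+-u-L2}, which gives $X_+u, X_-u \in L^2(SM)$, one projects onto degrees to extract the individual components. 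Concretely, $X_+u_k$ is the degree-$(k+1)$ part of $Xu$ only if the degrees line up correctly, so I would isolate components by noting $(Xu)_{k+1} = X_+u_k + X_-u_{k+2}$ and solving this triangular system, using the global bounds from Lemma~\ref{lma:X+-u-L2}. Once $X_+u_k, X_-u_k \in L^2(SM)$ are known, I apply the eigenvalue identity~\eqref{eq:vgrad-uk}: since $X_+u_k$ has pure degree $k+1$ and $X_-u_k$ has pure degree $k-1$, we get $\aabs{\vgrad X_+u_k}^2 = (k+1)(k+n-1)\aabs{X_+u_k}^2$ and $\aabs{\vgrad X_-u_k}^2 = (k-1)(k+n-3)\aabs{X_-u_k}^2$, both finite. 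Because these two pieces live in orthogonal eigenspaces of $\Delta$, the vertical gradients are orthogonal as well, and $\aabs{\vgrad Xu_k}^2 = \aabs{\vgrad X_+u_k}^2 + \aabs{\vgrad X_-u_k}^2 < \infty$.

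\smallskip

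The main obstacle will be the rigorous extraction of the individual $L^2$ bounds on $X_\pm u_k$ from the global statement of Lemma~\ref{lma:X+-u-L2}, handling the low-regularity subtlety that $u$ is only Lipschitz up to $\pSM$ rather than $C^2$ there. The difficulty is that $\vgrad$ applied to $u_k$ need not commute cleanly with the spherical harmonic projection near the boundary, where the non-smoothness of the broken geodesic flow (tangential reflections) is concentrated. I expect to address this by working first on the interior $\sisus SM$, where $u_k \in C^2$ makes all the differential identities classical, and then pass to $L^2$ bounds via the fact that the relevant quantities are controlled in $L^2(SM)$ by Lemma~\ref{lma:X+-u-L2} and the eigenvalue identity, invoking density or a limiting argument to conclude the global $L^2$ membership. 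The finite-degree structure of $f$ (only degrees $m, m-2, \dots$ appear) should also be used to truncate the sum and keep the argument finite.
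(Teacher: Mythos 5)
Your proposal is correct and follows essentially the same route as the paper: the first claim via $\vgrad Xu=-\vgrad f\in C^1(SM)\subset L^2(SM)$, and the componentwise claim via Lemma~\ref{lma:X+-u-L2} together with the eigenvalue identity~\eqref{eq:vgrad-uk} applied to the pure-degree pieces $X_\pm u_k$. The only superfluous element is the ``triangular system'' you propose to solve: since Lemma~\ref{lma:X+-u-L2} gives $X_+u$ and $X_-u$ \emph{separately} in $L^2(SM)$ and $X_\pm$ shifts the degree by exactly one, projecting each onto a single degree yields $X_\pm u_k=(X_\pm u)_{k\pm1}\in L^2(SM)$ directly, which is precisely how the paper argues.
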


Now that~$u$ and~$u_k$ are sufficiently regular, we may apply a Pestov identity.
The identity contains a boundary term featuring the differential operator
\begin{equation}
\label{eq:Q-def}
Q\coloneqq \vnu\ehgrad-\nu X
\end{equation}
defined at the boundary of the sphere bundle.
This operator will be discussed in section~\ref{sec:bdy-terms}.

\begin{lemma}
\label{lma:pestov-smooth}
Let~$M$ be a smooth and compact Riemannian manifold with smooth boundary.
If $u\in C^2(SM)$, then
\begin{equation}
\aabs{\vgrad Xu}^2
=
\aabs{X\vgrad u}^2
-\iip{R\vgrad u}{\vgrad u}
+(n-1)\aabs{Xu}^2
+P(u,u),
\end{equation}
where~$P$ is the quadratic form defined by
\begin{equation}
P(u,w)
=
\iip{Qu}{\vgrad w}_{\pSM}.
\end{equation}
\end{lemma}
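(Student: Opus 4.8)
The plan is to follow the classical commutator derivation of the Pestov identity, exactly as on a closed manifold, but to retain every boundary contribution and then package those contributions into the form $\iip{Qu}{\vd u}_{\pSM}$. Since $u\in C^2(SM)$, all of the integration-by-parts formulas quoted in the excerpt are available. The first step is to use the commutator $[X,\vd]=-\hd$ to write $\vd Xu=X\vd u+\hd u$, so that expanding the square gives
\begin{equation}
\aabs{\vd Xu}^2=\aabs{X\vd u}^2+2\iip{X\vd u}{\hd u}+\aabs{\hd u}^2 .
\end{equation}
Everything then reduces to evaluating $2c+\aabs{\hd u}^2$, where $c\coloneqq\iip{X\vd u}{\hd u}$.

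I would obtain two relations for $c$. For the first, integrating $X$ by parts on~$\mathcal Z$ and using $[X,\hd]=R\vd$ yields
\begin{equation}
c=-\iip{\vd u}{\hd Xu}-\iip{R\vd u}{\vd u}+\iip{\vnu\vd u}{\hd u}_{\pSM}.
\end{equation}
For the second, I would rewrite $\iip{\vd u}{\hd Xu}$ using the vertical adjoint (which carries no boundary term) together with $\vdiv\hd=\hdiv\vd-(n-1)X$, and then integrate the resulting horizontal divergence and the double geodesic derivative $X^2$ by parts; this converts $\iip{\vd u}{\hd Xu}$ into $\iip{\hd u}{\vd Xu}-(n-1)\aabs{Xu}^2$ plus a term $\iip{\vnu u}{Xu}_{\pSM}$ and a horizontal boundary term. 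Since $\iip{\hd u}{\vd Xu}=c+\aabs{\hd u}^2$, the two expressions for $c$ can be solved simultaneously, and the bulk terms combine precisely as in the closed case to give
\begin{equation}
2c+\aabs{\hd u}^2=-\iip{R\vd u}{\vd u}+(n-1)\aabs{Xu}^2+(\text{boundary terms}).
\end{equation}
This already reproduces the stated volume terms with the correct signs and coefficients.

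The genuinely new task is to show that the accumulated boundary terms equal $P(u,u)=\iip{Qu}{\vd u}_{\pSM}$. The contribution $\iip{\vnu\vd u}{\hd u}_{\pSM}$ is symmetric and matches the $\vnu\hd u$ part of $Qu$, while the terms produced by integrating $X$ and $\hd$ against the boundary must combine, via $\ehd u=vXu+\hd u$ and $\ip{v}{\vd u}=0$, into the $-\nu Xu$ part of~$Qu$. The cleanest route is to recast the computation as a pointwise identity that exhibits the integrand as a total horizontal divergence plus a vertical divergence; the latter integrates to zero over the closed fibers, and the horizontal divergence theorem on~$SM$ — whose Sasaki outer conormal to~$\pSM$ is the horizontal lift of the outer unit normal~$\nu$ — turns the former into a boundary integral over~$\pSM$ pairing~$\nu$ against the total horizontal gradient. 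I expect this boundary bookkeeping to be the main obstacle: establishing the horizontal divergence theorem on $SM$ with boundary, and checking that the integrand is exactly $\iip{Qu}{\vd u}_{\pSM}$, where $\nu$ splits into its $v$-component (producing~$\vnu$) and its component in $\{v\}^{\perp}$. This is precisely the analysis of the operator~$Q$ deferred to section~\ref{sec:bdy-terms}.
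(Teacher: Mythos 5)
Your interior computation is correct, and after unwinding it is the paper's own derivation reorganized: the paper pairs $(X\vdiv\vd X-\vdiv XX\vd)u$ against $u$ and eliminates $\hdiv$ \emph{algebraically} via $\hdiv\vd-\vdiv\hd=(n-1)X$ before any horizontal integration by parts, whereas your second relation keeps an explicit $\hdiv\vd Xu$, so you additionally need the horizontal integration by parts formula $\iip{\hd w}{Z}=-\iip{w}{\hdiv Z}+\iip{w\,\nu}{Z}_{\pSM}$, which is not among the formulas quoted in the paper and which your route must establish separately (it is standard for the Sasaki metric, but the paper's arrangement avoids it entirely). Carrying out your two relations carefully, the boundary contribution you accumulate is
\begin{equation}
\iip{\vnu\vd u}{\hd u}_{\pSM}
-(n-1)\iip{\vnu u}{Xu}_{\pSM}
+\iip{u\,\nu}{\vd Xu}_{\pSM},
\end{equation}
and the lemma is precisely the claim that this equals $P(u,u)=\iip{\vnu\hd u-\nu Xu}{\vd u}_{\pSM}$. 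A smaller omission first: your intermediate steps pass through third-order expressions ($\vdiv\hd Xu$, $\hdiv\vd Xu$, $X^2u$), so $u\in C^2(SM)$ does not justify them; as in the paper one should derive the identity for $u\in C^4(SM)$ and conclude for $C^2$ by approximation, which is legitimate since the final identity contains only second-order derivatives.

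The genuine gap is the boundary identification itself — the only content of this lemma beyond the closed-manifold Pestov identity — and the mechanism you name cannot close it. The algebraic facts $\ehd u=vXu+\hd u$ and $\ip{v}{\vd u}=0$ do not suffice: in the last two terms above, $u$ and $Xu$ carry no vertical derivative, while in $P(u,u)$ everything is paired against $\vd u$, so no pointwise manipulation can match them; in particular your hoped-for statement that the boundary integrand is \emph{exactly} $\ip{Qu}{\vd u}$ is false pointwise, and holds only modulo a vertical divergence along the boundary fibers. The missing idea is a vertical integration by parts over the fibers $S_xM$, $x\in\partial M$ (these are full spheres, so no new boundary appears), using $\ip{\nu}{\vd w}=\ip{\vd(\vnu)}{\vd w}$ together with the degree-one eigenfunction identity $\vdiv\vd(\vnu)=-(n-1)\vnu$. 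This yields
\begin{equation}
\iip{u\,\nu}{\vd Xu}_{\pSM}
=
-\iip{Xu\,\nu}{\vd u}_{\pSM}
+(n-1)\iip{\vnu u}{Xu}_{\pSM},
\end{equation}
which cancels your $(n-1)$-term and leaves exactly $\iip{\vnu\hd u-\nu Xu}{\vd u}_{\pSM}=P(u,u)$, where $\ip{v}{\vd u}=0$ disposes of the $vXu$ component of $\ehd u$. This fibrewise computation is exactly what the paper performs when it rewrites $-\iip{\vnu\vdiv\vd Xu}{u}_{\pSM}$; with it inserted, your proof closes and is essentially equivalent to the paper's, but as written the decisive step is asserted rather than proven, and the route you sketch for it would not produce it.
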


The boundary term has a special form when~$u$ has a reflection symmetry at~$\innerbdy$.
To this end, let us define a new quadratic form~$H$ by
\begin{equation}
H(u,w)
=
\int_{\pSM}\Pi_{x}(Tu(x,v),Tw(x,v))\der\Sigma^{2n-2},
\end{equation}
where
\begin{equation}
Tu(x,v)
\coloneqq
%v^{\parallel}\langle\nu,\vd u\rangle-\vnu (\vd u)^{\parallel}
%=
\langle \vd u,\nu\rangle v-\vnu\vd u
\end{equation}
and~$\Pi_x$ is the second fundamental form at $x\in\partial M$.
Geometrically, $Tu(x,v)$ is a projection of $\vgrad u\in T_xM$ to $T_x(\partial M)$.
For the different boundary components~$\outerbdy$ and~$\innerbdy$ we may naturally define~$P_\outerbdy$, $P_\innerbdy$, $H_\outerbdy$, and~$H_\innerbdy$ by restricting the quadratic form to the relevant component.

\begin{lemma}
\label{lma:bdy-term}
Let~$M$ be a smooth and compact Riemannian manifold with smooth boundary.
If $u\in C^2(SM)$ and $u\circ\reflect=\pm u$ at~$\partial M$, then $P(u,u)=-H(u,u)$.
\end{lemma}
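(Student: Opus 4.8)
The plan is to massage $P(u,u)$ into a form involving only the tangential component of the total horizontal gradient, and then to fold the boundary integral onto itself by means of the reflection~$\reflect$, which is a measure-preserving involution of~$\pSM$. The symmetry $u\circ\reflect=\pm u$ lets me rewrite the reflected integrand in terms of the original one; the discrepancy is produced entirely by the dependence of $\reflect_x$ on the point~$x$, and this is precisely where the second fundamental form~$\Pi_x$, hence~$H$, appears.

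First I would record the pointwise identity, valid on all of~$\pSM$,
\begin{equation}
\label{eq:plan-P}
\ip{Qu}{\vgrad u}=-\ip{\ehgrad u}{Tu}.
\end{equation}
Indeed, writing $E\coloneqq\ehgrad u=v\,Xu+\hgrad u$ and using $\hgrad u\perp v$ gives $Xu=\ip{E}{v}$, so that $Qu=\vnu\,E-\ip{E}{v}\,\nu$. Pairing with $\vgrad u$ and noting that $\vnu\,\vgrad u-\ip{\vgrad u}{\nu}\,v=-Tu$ yields~\eqref{eq:plan-P}. Integrating over~$\pSM$ gives
\begin{equation}
\label{eq:plan-Pint}
P(u,u)=-\int_{\pSM}\ip{\ehgrad u(x,v)}{Tu(x,v)}\,\der\Sigma^{2n-2}.
\end{equation}
Since $Tu(x,v)\in T_x(\partial M)$ is tangential, only the tangential part of $\ehgrad u$ contributes to~\eqref{eq:plan-Pint}.

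The central computation is the behaviour of the integrand under $v\mapsto\reflect_x v$. Because $\reflect_x$ is an isometry of the fibre $S_xM$, differentiating $u\circ\reflect=\pm u$ within the fibre gives $\vgrad u(x,\reflect_x v)=\pm\reflect_x\vgrad u(x,v)$; combined with $\reflect_x\nu=-\nu$ and the fact that $\reflect_x$ fixes $T_x(\partial M)$ pointwise, a direct computation then gives $Tu(x,\reflect_x v)=\mp Tu(x,v)$. For the total horizontal gradient I would differentiate the identity $u(x(s),\reflect_{x(s)}v(s))=\pm u(x(s),v(s))$ along a curve $x(s)\subset\partial M$ with $\dot x(0)=\xi\in T_x(\partial M)$ and $v(s)$ parallel along $x(s)$, using that for a curve $s\mapsto(x(s),v(s))$ in~$SM$ one has $\frac{\der}{\der s}u(x(s),v(s))=\ip{\ehgrad u}{\dot x}+\ip{\vgrad u}{\frac{Dv}{\der s}}$. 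The only subtle point is that $\frac{D}{\der s}\bigl(\reflect_{x(s)}v(s)\bigr)$ picks up the term $\frac{D}{\der s}\nu(x(s))=\nabla_\xi\nu$, the covariant derivative of the unit normal, which is governed by the shape operator. Carrying this out produces, for every $\xi\in T_x(\partial M)$,
\begin{equation}
\label{eq:plan-refl}
\ip{\ehgrad u(x,\reflect_x v)}{\xi}=\pm\ip{\ehgrad u(x,v)}{\xi}\mp2\,\Pi_x(Tu(x,v),\xi),
\end{equation}
where I used $\Pi_x(\xi,\eta)=\ip{\nabla_\xi\nu}{\eta}$ and its symmetry. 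Establishing~\eqref{eq:plan-refl} is the main obstacle: it requires tracking the $x$-dependence of the reflection and correctly identifying the resulting $\nabla_\xi\nu$-term with $\Pi_x(Tu,\xi)$.

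Finally I would change variables $v\mapsto\reflect_x v$ in~\eqref{eq:plan-Pint}. Each $\reflect_x$ is a fibrewise isometry fixing the base, so $\reflect$ preserves $\der\Sigma^{2n-2}$ (and $\partial_0SM$ is a null set), giving $P(u,u)=-\int_{\pSM}\ip{\ehgrad u(x,\reflect_x v)}{Tu(x,\reflect_x v)}\,\der\Sigma^{2n-2}$. Substituting $Tu(x,\reflect_x v)=\mp Tu(x,v)$ and then~\eqref{eq:plan-refl} with $\xi=Tu(x,v)$, the signs combine — independently of the overall sign in $u\circ\reflect=\pm u$ — to turn the reflected integrand $\ip{\ehgrad u(x,\reflect_x v)}{Tu(x,\reflect_x v)}$ into $-\ip{\ehgrad u}{Tu}+2\,\Pi_x(Tu,Tu)$. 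Comparing this with~\eqref{eq:plan-Pint} yields $P(u,u)=-P(u,u)-2H(u,u)$, and hence $P(u,u)=-H(u,u)$. Since every step is pointwise in~$x$, the same computation localizes to give $P_\innerbdy=-H_\innerbdy$ and $P_\outerbdy=-H_\outerbdy$ separately, which is the form in which the identity is used.
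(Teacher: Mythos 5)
Your proposal is correct --- I verified the pointwise identity $\ip{Qu}{\vd u}=-\ip{\ehd u}{Tu}$, the fibrewise rules $\vd u(x,\reflect_x v)=\pm\reflect_x\vd u(x,v)$ and $Tu(x,\reflect_x v)=\mp Tu(x,v)$, and your key transformation formula: differentiating $u\circ\reflect=\pm u$ along a curve $x(s)\subset\partial M$ with $v(s)$ parallel (with respect to the connection of~$M$, so that the vertical term drops on the right-hand side --- worth stating explicitly) gives $\frac{D}{\der s}\bigl(\reflect_{x(s)}v(s)\bigr)\big|_{s=0}=-2\ip{v}{\nabla_\xi\nu}\nu-2\vnu\nabla_\xi\nu$, and inserting the fibrewise rule and the tangentiality of $\nabla_\xi\nu$ does produce $\ip{\ehd u(x,\reflect_x v)}{\xi}=\pm\ip{\ehd u(x,v)}{\xi}\mp2\Pi_x(Tu(x,v),\xi)$ for tangential~$\xi$; the parities then cancel as you claim, yielding $P(u,u)=-P(u,u)-2H(u,u)$. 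The strategy coincides with the paper's --- fold the boundary integral by the measure-preserving fibrewise isometry~$\reflect$ and locate the discrepancy in the $x$-dependence of~$\reflect_x$, i.e.\ in $\nabla\nu$ --- but the execution is genuinely different. The paper rewrites~$Q$ via operators acting on $C^\infty(\pSM)$ (lemma~\ref{lma:Q}), computes $\der\reflect$ and its Sasaki adjoint explicitly in~\eqref{eq:rho} and~\eqref{eq:rho*}, and feeds these into the chain rule to get the composition rules of lemmas~\ref{lemma:hrho} and~\ref{lemma:xrho}; this yields the unpolarized identity~\eqref{eq:PH}, valid for arbitrary~$u$ with no symmetry assumed, of which the lemma is the specialization. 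You impose the symmetry at the outset and differentiate it implicitly, which collapses those two lemmas into a single rule for the tangential component of $\ehd u$ and avoids computing $\der\reflect^*$ altogether; this is shorter and makes transparent why only the tangential part of $\ehd u$ and the shape operator can enter. What you give up is the general identity~\eqref{eq:PH} for non-symmetric~$u$; since the paper only ever applies the symmetric case (in the proof of lemma~\ref{lma:pestov-lip} the approximants are first split into even and odd parts before the lemma is invoked), your version would suffice for all downstream uses. Your closing observation that the computation localizes to give $P_\outerbdy=-H_\outerbdy$ and $P_\innerbdy=-H_\innerbdy$ separately is also right, since the folding takes place within each fibre $S_xM$.
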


An approximation argument is needed to prove a Pestov identity for the regularity and boundary behavior provided by lemmas~\ref{lma:u-reg} and~\ref{lma:uk-reg}.

\begin{lemma}
\label{lma:pestov-lip}
Let $(M,g,\outerbdy)$ be admissible.
If $u\in C^2(\sisus SM)\cap\Lip(SM)$, $\vgrad Xu\in L^2(SM)$, $u=u\circ\reflect$ at $\innerbdy$ and $u=0$ at~$\outerbdy$, then $X\vgrad u\in L^2(SM)$ and
\begin{equation}
\label{eq:pestov}
\aabs{\vgrad Xu}^2
=
\aabs{X\vgrad u}^2
-\iip{R\vgrad u}{\vgrad u}
+(n-1)\aabs{Xu}^2
-H_\innerbdy(u,u).
\end{equation}
In particular,
\begin{equation}
\label{eq:pestov-estimate}
\aabs{\vgrad Xu}^2
\geq
\aabs{X\vgrad u}^2
+(n-1)\aabs{Xu}^2.
\end{equation}
\end{lemma}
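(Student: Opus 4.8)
The plan is to obtain the identity~\eqref{eq:pestov} from the smooth Pestov identity of lemma~\ref{lma:pestov-smooth} by an approximation argument, and then to deduce the estimate~\eqref{eq:pestov-estimate} by discarding the curvature and boundary terms using the sign hypotheses. The difficulty is that~$u$ is only~$C^2$ in the interior and Lipschitz up to the boundary, so lemma~\ref{lma:pestov-smooth} does not apply directly. First I would choose a sequence of functions $u_j\in C^2(SM)$ that approximate~$u$ in a topology strong enough to pass to the limit in every term of the identity: we need $u_j\to u$ and $\vgrad u_j\to\vgrad u$ in $L^2(SM)$, $\vgrad Xu_j\to\vgrad Xu$ in $L^2(SM)$ (using the hypothesis $\vgrad Xu\in L^2$), and convergence of the boundary quantities $Tu_j\to Tu$ in $L^2(\pSM)$ so that the quadratic form~$H_\innerbdy$ passes to the limit. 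A natural construction is to exhaust~$\sisus SM$ by slightly shrunk copies of~$M$ (or to mollify along the fibers while cutting off near the singular set), arranging that each~$u_j$ inherits the reflection symmetry $u_j=u_j\circ\reflect$ at~$\innerbdy$ and the vanishing $u_j=0$ at~$\outerbdy$; these boundary conditions are what let us replace the raw boundary form $P(u_j,u_j)$ by $-H_\innerbdy(u_j,u_j)$.

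The key step is to invoke lemma~\ref{lma:bdy-term}: because $u\circ\reflect=+u$ holds at~$\innerbdy$ (and $u=0$, hence trivially symmetric, at~$\outerbdy$), the boundary term collapses to $P(u,u)=-H(u,u)=-H_\outerbdy(u,u)-H_\innerbdy(u,u)$. I would then show that the $\outerbdy$ contribution vanishes. Since $u=0$ on~$\outerbdy$, the tangential part of~$\vgrad u$ along~$\outerbdy$ vanishes there, and inspection of the definition $Tu(x,v)=\langle\vgrad u,\nu\rangle v-\vnu\vgrad u$ shows that~$Tu$ is built from the tangential behavior of~$u$ on the boundary, so $Tu=0$ on~$\outerbdy$ and therefore $H_\outerbdy(u,u)=0$. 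This leaves exactly the boundary term $-H_\innerbdy(u,u)$ appearing in~\eqref{eq:pestov}. Applying lemma~\ref{lma:pestov-smooth} to each~$u_j$, substituting $P(u_j,u_j)=-H_\innerbdy(u_j,u_j)$, and letting $j\to\infty$ yields~\eqref{eq:pestov}; the convergence of $\aabs{X\vgrad u_j}^2$ then also establishes that $X\vgrad u\in L^2(SM)$ as a byproduct.

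For the estimate~\eqref{eq:pestov-estimate} I would discard the two nonpositive contributions on the right-hand side of~\eqref{eq:pestov}. The curvature term $-\iip{R\vgrad u}{\vgrad u}$ is nonnegative because the sectional curvature of $(M,g)$ is nonpositive, so $\langle R(x,v)w,w\rangle\leq0$ for all $w\perp v$ by admissibility condition~(3). The boundary term $-H_\innerbdy(u,u)$ is nonnegative because~$\innerbdy$ is strictly concave in the sense of the second fundamental form (condition~(2)): on~$\innerbdy$ the form~$\Pi_x$ is negative (semi)definite in the appropriate sign convention, so $\Pi_x(Tu,Tu)\leq0$ pointwise and hence $H_\innerbdy(u,u)\leq0$, giving $-H_\innerbdy(u,u)\geq0$. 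Dropping both terms leaves $\aabs{\vgrad Xu}^2\geq\aabs{X\vgrad u}^2+(n-1)\aabs{Xu}^2$, which is~\eqref{eq:pestov-estimate}. The main obstacle is the approximation step: one must construct the~$u_j$ so that they simultaneously respect both boundary conditions and converge strongly enough that the boundary form $H_\innerbdy$ converges, which is delicate precisely because~$u$ fails to be smooth at the tangential reflection locus on~$\innerbdy$ — exactly the set where the broken geodesic flow is nonsmooth. Controlling the boundary trace near this set, likely using the one-tangential-reflection bound from admissibility condition~(4), is where the real work lies.
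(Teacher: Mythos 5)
Your overall strategy---regularize, apply the smooth Pestov identity of lemma~\ref{lma:pestov-smooth} to the approximants, convert the boundary form $P$ into $-H$ via lemma~\ref{lma:bdy-term}, pass to the limit, and finally discard the curvature and $\innerbdy$ terms using non-positive curvature and concavity---is exactly the paper's, and your sign analysis for~\eqref{eq:pestov-estimate} is correct, as is the vanishing of the $\outerbdy$ contribution (indeed $\vgrad u$ vanishes on all of $\outerbdy$, not just tangentially, since $u(x,\cdot)\equiv 0$ on whole fibers over $\outerbdy$ and $\vgrad$ differentiates only along the fiber). But there is a genuine gap at the step you yourself flag as the main obstacle: you require the approximants $u_j\in C^2(SM)$ to satisfy $u_j=u_j\circ\reflect$ at $\innerbdy$ and $u_j=0$ at $\outerbdy$ \emph{exactly}, and neither of your proposed constructions delivers this. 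Mollification destroys both conditions, since boundary values of a mollified function are averages of nearby interior values; and if you shrink the manifold, the reflection map on the shrunk boundary is defined with respect to the new normal, so $u$ satisfies no exact symmetry there and lemma~\ref{lma:bdy-term} cannot be applied to any approximant. Producing smooth approximants with the exact symmetry would essentially amount to a doubling construction across $\innerbdy$, which the paper deliberately avoids.

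The paper closes this gap without imposing any boundary condition on the approximants. It extends $u$ to a compactly supported Lipschitz function on a slightly larger manifold, mollifies to get $u^j$, and exploits $u\in\Lip(\pSM)\subset H^1(\pSM)$, so that $u^j\to u$ in $H^1(\pSM)$; since $Q$ is a first-order tangential operator on the boundary (lemma~\ref{lma:Q}), the form $P$ is continuous in this topology. Then $P_\outerbdy(u^j,u^j)\to 0$ because $u$ vanishes on $\outerbdy$, and at $\innerbdy$ one splits $u^j=u^j_e+u^j_o$ into even and odd parts under $\reflect$: lemma~\ref{lma:bdy-term} applies exactly to each part, while the odd and cross terms vanish in the limit because the limit function $u$ is even. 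This even/odd splitting of \emph{unconstrained} mollifications is the missing idea in your proposal. Two smaller corrections: the one-tangential-reflection bound, condition (4) of admissibility, plays no role here---it was consumed in lemma~\ref{lma:u-reg}, and the present lemma takes the regularity as a hypothesis, so the boundary convergence is just standard mollifier theory for Lipschitz functions; and $X\vgrad u\in L^2(SM)$ should not be extracted as a limiting ``byproduct'' (that reasoning is circular without prior $L^2$-convergence of $X\vgrad u^j$): it follows directly from the commutator identity $X\vgrad u=\vgrad Xu-\hgrad u$, both terms lying in $L^2$ by hypothesis and Lipschitz regularity, and this same identity supplies the $L^2$-convergence of the second-order term needed in the limit.
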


The boundary term of~\eqref{eq:pestov} reduces to the two-dimensional one obtained in~\cite{IS:brt} following the comparison of the general framework and the two-dimensional one given in~\cite[Appendix B]{PSU:Beurling}.

Lemma~\ref{lma:pestov-lip} is enough to prove the theorem for $m=0$ and $m=1$.
If $m=0$, then $Xu=-f$ is constant on each fiber.
Therefore $\vgrad Xu=0$ and~\eqref{eq:pestov-estimate} implies $\aabs{Xu}=0$, from which we conclude that $f=-Xu=0$ as desired.

If $m=1$, then $Xu=-f$ has rank one.
Using~\eqref{eq:vgrad-uk} with $k=1$ gives $\aabs{\vgrad Xu}^2=(n-1)\aabs{Xu}^2$, and we may conclude $X\vgrad u=0$.
This means\ntr{Removed repeated word.} that~$\vgrad u$ is constant along the geodesic flow, and the reflection condition ensures that it is also constant along the broken geodesic flow.
Since $u$ vanishes at~$\outerbdy$ and all broken rays reach~$\outerbdy$ in finite time, the derivative~$\vgrad u$ has to vanish identically on~$SM$.
This means that $u=-\pi^*h$ for some scalar function~$h$.
Now the condition $Xu=-f$ means that $f=\der h$, which proves solenoidal injectivity.

For $m\geq2$ more tools are needed.
Instead of working with the whole function~$u$, we study the terms~$u_k$ separately.
Comparing these terms will involve the constants
\begin{equation}
C(k,n)
=
1+\frac{2}{2k+n-3}
\end{equation}
and
\begin{equation}
B(k,N,n)
=
\prod_{l=1}^N C(k+2l,n).
\end{equation}
Studying the transport equation quickly gives an\ntr{Fixed spelling error.} identity connecting~$X_-$ and~$X_+$.

\begin{lemma}
\label{lma:X+=X-}
Assume $(M,g,\outerbdy)$ is admissible.
Let~$f$ be a tensor field of order~$m$ with vanishing broken ray transform and~$u$ as defined in~\eqref{eq:u-def}.
If $k\geq m$ or $k-m$ is even, then $\aabs{X_+u_k}^2=\aabs{X_-u_{k+2}}^2$.
\end{lemma}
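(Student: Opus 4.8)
The plan is to project the transport equation $Xu=-f$ onto spherical harmonics of a fixed degree and exploit the fact that the splitting $X=X_++X_-$ shifts degree by exactly one. Recall first that, by construction, $Xu=-f$ holds on $\sisus SM$, and that $f$, viewed as the function on $SM$ associated to a symmetric tensor field of order~$m$, has spherical harmonic components only in degrees $m,m-2,m-4,\dots$. In particular $f_j=0$ whenever $j>m$ or $j-m$ is odd.

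Next I would set up the degree bookkeeping. Since $X_+\colon\Lambda^k\to\Lambda^{k+1}$ and $X_-\colon\Lambda^k\to\Lambda^{k-1}$, the degree $k+1$ component of $Xu$ receives exactly two contributions: $X_+u_k$ coming from $u_k$ and $X_-u_{k+2}$ coming from $u_{k+2}$. By lemma~\ref{lma:uk-reg} each $u_k$ is regular enough for $X_\pm u_k$ to make sense pointwise on $\sisus SM$, and by lemma~\ref{lma:X+-u-L2} we have $X_\pm u\in L^2(SM)$. Because $X_+$ raises and $X_-$ lowers degree by one, the degree $k+1$ components of $X_+u$ and of $X_-u$ are precisely $X_+u_k$ and $X_-u_{k+2}$, so these individual terms lie in $L^2(SM)$. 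This legitimizes extracting a single degree from $Xu=-f$, which gives
\[
X_+u_k+X_-u_{k+2}=-f_{k+1}.
\]

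The hypothesis ``$k\geq m$ or $k-m$ even'' is exactly the condition that makes $f_{k+1}$ vanish: if $f_{k+1}\neq0$, then $k+1$ would have to be one of the degrees $m,m-2,\dots$, forcing both $k+1\leq m$ (that is, $k<m$) and $k+1\equiv m\pmod 2$ (that is, $k-m$ odd). Under the stated hypothesis neither of these can hold simultaneously, so $f_{k+1}=0$. We then obtain $X_+u_k=-X_-u_{k+2}$ as an identity in $L^2(SM)$, and taking norms yields $\aabs{X_+u_k}^2=\aabs{X_-u_{k+2}}^2$, as claimed.

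I expect this argument to be essentially bookkeeping once the pointwise identity $Xu=-f$ and the degree-shifting behavior of $X_\pm$ are available, so there is no genuine analytic obstacle. The one point that requires care is justifying that a single degree can be split off from $Xu=-f$ as a valid $L^2$ identity, namely that the degree $k+1$ projections of $X_\pm u$ really are $X_+u_k$ and $X_-u_{k+2}$; this is exactly what the regularity of lemma~\ref{lma:uk-reg} together with the $L^2$ membership of lemma~\ref{lma:X+-u-L2} are there to supply.
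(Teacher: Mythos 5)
Your proof is correct and is essentially the paper's own argument: project the transport equation $Xu=-f$ onto degree $k+1$, use the degree-shifting of $X_\pm$ to get $X_+u_k+X_-u_{k+2}=-f_{k+1}$, and observe that the hypothesis forces $f_{k+1}=0$. The extra care you take with the $L^2$ justification of the projection is fine but not needed beyond what the paper already records via lemmas~\ref{lma:uk-reg} and~\ref{lma:X+-u-L2}.
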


Applying the Pestov identity of lemma~\ref{lma:pestov-lip} to~$u_k$ gives an estimate between~$X_-$ and~$X_+$, which may be seen as continuity of the Beurling transform.

\begin{lemma}
\label{lma:X-<X+}
Assume $(M,g,\outerbdy)$ is admissible.
Let $f$ be a tensor field of order $m$ with vanishing broken ray transform and $u$ as defined in~\eqref{eq:u-def}.
Then we have
\begin{equation}
\aabs{X_-u_k}^2
\leq
C(k,n)
\aabs{X_+u_k}^2
\end{equation}
whenever $2k+n>3$.
\end{lemma}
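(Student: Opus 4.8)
The plan is to feed the single spherical harmonic~$u_k$ into the Pestov estimate~\eqref{eq:pestov-estimate} and then rewrite every term through the eigenvalue identity~\eqref{eq:vgrad-uk}. First I would check that~$u_k$ is a legitimate input for lemma~\ref{lma:pestov-lip}: the regularity $u_k\in C^2(\sisus SM)\cap\Lip(SM)$ and the symmetry $u_k=u_k\circ\reflect$ at~$\innerbdy$ are supplied by lemma~\ref{lma:uk-reg}, and $\vgrad Xu_k\in L^2(SM)$ is lemma~\ref{lma:L2-reg}. The remaining hypothesis $u_k=0$ at~$\outerbdy$ follows because~$u$ already vanishes on $S_xM$ for every $x\in\outerbdy$: outgoing directions give $\tau=0$, while incoming directions trace a complete broken ray with both endpoints on~$\outerbdy$, whose $f$-integral is zero by assumption. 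Since the spherical harmonic decomposition is performed fibrewise, each~$u_k$ inherits this vanishing.

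With lemma~\ref{lma:pestov-lip} available, write $\lambda_j\coloneqq j(j+n-2)$ for the eigenvalue of~$\Delta$ on degree~$j$, so $\lambda_1=n-1$. Since $Xu_k=X_+u_k+X_-u_k$ splits into the orthogonal degrees $k+1$ and $k-1$, and the cross term $\iip{\vgrad X_+u_k}{\vgrad X_-u_k}=\lambda_{k-1}\iip{X_+u_k}{X_-u_k}=0$ vanishes after integrating $\vdiv\vgrad$ by parts, the identity~\eqref{eq:vgrad-uk} gives
\begin{equation}
\aabs{\vgrad Xu_k}^2=\lambda_{k+1}\aabs{X_+u_k}^2+\lambda_{k-1}\aabs{X_-u_k}^2,
\qquad
(n-1)\aabs{Xu_k}^2=\lambda_1\bigl(\aabs{X_+u_k}^2+\aabs{X_-u_k}^2\bigr).
\end{equation}
Substituting these into~\eqref{eq:pestov-estimate} reduces the whole statement to controlling the one remaining term $\aabs{X\vgrad u_k}^2$ from below.

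The crux, and the step I expect to be the main obstacle, is the sharp lower bound
\begin{equation}
\aabs{X\vgrad u_k}^2\ \geq\ (\lambda_k-\lambda_1)\aabs{Xu_k}^2=(k-1)(k+n-1)\aabs{Xu_k}^2.
\end{equation}
This cannot come from the value of $\aabs{\vgrad Xu_k}^2$ (which weights $X_-u_k$ too lightly, so that simply discarding $\aabs{X\vgrad u_k}^2\geq0$ is vacuous for $k\geq2$); it must be extracted from the way~$X$ acts on sections of the pull-back bundle~$N$. I would derive it from the commutator computation $\vdiv X\vgrad u_k=-(\lambda_k-\lambda_1)Xu_k+\vdiv\hgrad u_k$ — obtained from $[X,\vdiv]=-\hdiv$, $\vdiv\vgrad=-\Delta$, and $\hdiv\vgrad-\vdiv\hgrad=(n-1)X$ — together with the mapping properties of~$X_\pm$ from section~\ref{sec:X+-} to control the contribution of $\vdiv\hgrad u_k$. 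Granting this bound, the quantities $\lambda_1$ and $\lambda_k-\lambda_1$ telescope to $\lambda_k$ and~\eqref{eq:pestov-estimate} becomes $\lambda_{k+1}\aabs{X_+u_k}^2+\lambda_{k-1}\aabs{X_-u_k}^2\geq\lambda_k\bigl(\aabs{X_+u_k}^2+\aabs{X_-u_k}^2\bigr)$, i.e. $(\lambda_k-\lambda_{k-1})\aabs{X_-u_k}^2\leq(\lambda_{k+1}-\lambda_k)\aabs{X_+u_k}^2$. Since $\lambda_{k+1}-\lambda_k=2k+n-1$ and $\lambda_k-\lambda_{k-1}=2k+n-3$, the assumption $2k+n>3$ makes the latter positive and yields exactly $\aabs{X_-u_k}^2\leq\frac{2k+n-1}{2k+n-3}\aabs{X_+u_k}^2=C(k,n)\aabs{X_+u_k}^2$. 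The one other delicate point is ensuring the Pestov identity genuinely applies to each~$u_k$ even though~$u$ is only Lipschitz up to the boundary and non-smooth at tangential reflections, which is precisely what lemmas~\ref{lma:uk-reg} and~\ref{lma:L2-reg} and the approximation in lemma~\ref{lma:pestov-lip} are designed to guarantee.
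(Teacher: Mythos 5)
Your proposal is correct and takes essentially the same route as the paper's proof: apply the Pestov estimate of lemma~\ref{lma:pestov-lip} to the single harmonic~$u_k$ (with the regularity, reflection symmetry, and vanishing at~$\outerbdy$ justified exactly as you do), rewrite everything in terms of $\aabs{X_\pm u_k}^2$ via the eigenvalue and commutator identities, and discard $\aabs{\hgrad u_k}^2\geq0$ at the end. Your ``crux'' lower bound is in fact an identity, $\aabs{X\vgrad u_k}^2=(k-1)(k+n-1)\aabs{Xu_k}^2+\aabs{\hgrad u_k}^2$, which is just a rearrangement of the identity the paper imports from~\cite[Proposition 3.4]{PSU:Beurling}, and the ingredients you list ($[X,\vdiv]=-\hdiv$, $\vdiv\vgrad=-\Delta$, $\hdiv\vgrad-\vdiv\hgrad=(n-1)X$) already suffice because the two $\vdiv\hgrad u_k$ contributions cancel, with no further mapping properties of~$X_\pm$ needed.
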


Finally, we need an estimate for the constants.
Our constants $C(k,n)$ are not optimal for lemma~\ref{lma:X-<X+}, but are sufficient for our proof.
More detailed estimates have been used~\cite{PSU:Beurling} to show that $\lim_{N\to\infty}B(k,N,n)$ (with~$B$ redefined with sharper constants~$C$) is finite.

\begin{lemma}
\label{lma:prod-est}
The constant $B(k,N,n)$ defined above satisfies
\begin{equation}
B(k,N,n)
\leq
\sqrt{1+\frac{4N}{2k+n-3}}
\end{equation}
whenever $2k+n>3$.
\end{lemma}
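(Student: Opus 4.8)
The plan is to reduce the product bound to a telescoping comparison. First I would abbreviate $a\coloneqq 2k+n-3$, which is positive by the hypothesis $2k+n>3$, and rewrite each factor as
\begin{equation}
C(k+2l,n)
=
1+\frac{2}{a+4l}
=
\frac{a+4l+2}{a+4l},
\end{equation}
so that $B(k,N,n)=\prod_{l=1}^N\frac{a+4l+2}{a+4l}$.

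The key observation is that the square of the claimed upper bound itself telescopes:
\begin{equation}
1+\frac{4N}{2k+n-3}
=
\frac{a+4N}{a}
=
\prod_{l=1}^N\frac{a+4l}{a+4(l-1)},
\end{equation}
since the intermediate factors cancel. It therefore suffices to prove the single-factor inequality $\left(\frac{a+4l+2}{a+4l}\right)^2\leq\frac{a+4l}{a+4(l-1)}$ for each $l\in\{1,\dots,N\}$, and then multiply over~$l$ and take square roots.

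To verify this term-by-term inequality I would set $b\coloneqq a+4l$ and note that $b-4=a+4(l-1)\geq a>0$, so both denominators are positive and cross-multiplication is legitimate. The inequality then reads $(b+2)^2(b-4)\leq b^3$; expanding the left-hand side gives $b^3-12b-16$, so it is equivalent to $-12b-16\leq 0$, which holds trivially for $b>0$. Multiplying the $N$ term-by-term inequalities and combining with the telescoping identity yields $B(k,N,n)^2\leq 1+\tfrac{4N}{2k+n-3}$, and taking square roots completes the argument.

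There is no real obstacle here beyond spotting the telescoping structure of the right-hand side; once the target is written as a product indexed over the same range as~$B$, the estimate collapses to an elementary cubic inequality. The only point requiring a moment's care is that $2k+n>3$ guarantees positivity of every denominator $a+4(l-1)$ appearing in the comparison, which is exactly what makes the telescoping product well defined and keeps the cross-multiplications inequality-preserving; the non-optimality of the constants~$C$ noted before the lemma plays no role in this purely algebraic computation.
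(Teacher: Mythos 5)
Your proof is correct, and it reaches the stated bound by a genuinely different route than the paper. The paper's proof is analytic: it uses $\log(1+x)\leq x$ to bound $\log B(k,N,n)$ by $\frac12\sum_{l=1}^N(l+\alpha)^{-1}$ with $\alpha=(2k+n-3)/4$, and then a sum--integral comparison $\sum_{l=1}^N(l+\alpha)^{-1}\leq\int_0^N(x+\alpha)^{-1}\,\der x=\log(1+N/\alpha)$, which after exponentiating gives exactly the claimed square-root bound. You instead observe that the square of the claimed bound itself telescopes, $1+\frac{4N}{a}=\prod_{l=1}^N\frac{a+4l}{a+4(l-1)}$ with $a=2k+n-3$, and reduce the whole lemma to the one-line cross-multiplication $(b+2)^2(b-4)=b^3-12b-16\leq b^3$ with $b=a+4l$; your bookkeeping checks out, including the key point that $b-4=a+4(l-1)\geq a>0$ keeps every denominator positive, so squaring the product and cross-multiplying are inequality-preserving. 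The trade-off between the two: your argument is purely algebraic and self-contained, with no transcendental functions or integral estimates, and the factor-by-factor inequality even holds with room to spare; the paper's logarithmic method is less tailored to the specific constants $C(k,n)=1+2/(2k+n-3)$ and adapts immediately to any product of factors $1+\eps_l$ via $\sum_l\eps_l$ --- the same device that, with the sharper constants mentioned just before the lemma, is used to show that $B(k,N,n)$ stays uniformly bounded as in~\cite{PSU:Beurling}. For the theorem both proofs deliver the same $\Order(N^{1/2})$ growth of $B(m_0,k,n)$, which is all that the divergence argument in the main proof requires.
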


Now we are ready to finally prove the theorem.

\begin{proof}[Proof of theorem~\ref{thm:tensor}]
We gave a short proof for $m=0$ and $m=1$ above.
Therefore we assume $m\geq2$, although most of the arguments do not rely on this.

Define~$u$ as in~\eqref{eq:u-def}.
By lemmas~\ref{lma:u-reg} and~\ref{lma:uk-reg} both~$u$ and each~$u_k$ have sufficient regularity to apply lemma~\ref{lma:pestov-lip} to obtain a Pestov identity.
Using this identity for~$u_k$ leads to lemma~\ref{lma:X-<X+}.

The function~$f$ satisfies $f\circ\reverse=(-1)^mf$.
Since~$f$ integrates to zero over all broken rays, it follows from the definition of~$u$ that $u\circ\reverse=(-1)^{m+1}u$.
By symmetry of spherical harmonics we have $u_k\circ\reverse=(-1)^ku_k$.
\ntr{Added remark on symmetry of spherical harmonics to make the conclusion clearer.}
Therefore $u_k=0$ whenever $k-m$ is even.

Let $m_0\in\N$ be such that  $m_0\geq m$ and $m_0-m$ is odd.
Notice that $m_0\geq3$ by our assumption $m\geq2$.
Then combining lemmas~\ref{lma:X+=X-} and~\ref{lma:X-<X+} $k$ times gives
\begin{equation}
\label{eq:iterated-beurling}
\aabs{X_+u_{m_0}}^2
\leq
B(m_0,k,n)
\aabs{X_+u_{m_0+2k}}^2.
\end{equation}
Suppose $\aabs{X_+u_{m_0}}^2=a>0$.
Then~\eqref{eq:iterated-beurling} yields
\ntr{Fixed a wrong index in the formula.}
\begin{equation}
%\sum_{k=1}^N
\aabs{X_+u_{m_0+2k}}^2
\geq
a
%\sum_{k=1}^N
B(m_0,k,n)^{-1}.
\end{equation}
Lemma~\ref{lma:prod-est} gives\ntr{Fixed the index, same as above.} $B(m_0,k,n)^{-1}\geq Ak^{-1/2}$ for some constant $A>0$ depending on~$m_0$ and~$n$; the assumption $2m_0+n>3$ is always satisfied for $m_0\geq3$.
Thus
\begin{equation}
\sum_{k=1}^\infty\aabs{X_+u_{m_0+2k}}^2
\geq
aA\sum_{k=1}^\infty k^{-1/2}
=
\infty.
\end{equation}
But by lemma~\ref{lma:X+-u-L2} we have
\begin{equation}
\aabs{X_+u}^2
=
\sum_{k=0}^\infty\aabs{X_+u_k}^2
<
\infty.
\end{equation}
This is a contradiction, so we must have $a=0$.

%We conclude that when $k\geq m+2$ or $k-m$ is even, we have $X_+u_k=0$.
We conclude that when $k\geq m+2$ and $k-m$ is odd, we have $X_+u_k=0$.
Since~$u_k$ vanishes at~$\outerbdy$, it follows from lemma~\ref{lma:X+inj} that $u_k=0$.

Therefore~$u$ arises from a tensor field $-h$ of order $m-1$.
The transport equation $Xu=-f$ implies $\sd h=f$.
This finally proves the claim.
\end{proof}

Our method of proof is different from those used before.
In~\cite{PSU:Beurling,LRS:CH} sharper estimates are obtained and therefore the products $B(k,N,n)$ of constants are uniformly bounded.
Then if one argues that $\aabs{X_+u_k}\to0$ as $k\to0$, one can reach the same conclusion that $a=0$.
We used simpler estimates at the expense of not having a uniform bound for $B(k,N,n)$.
This simplification is possible because $X_+u\in L^2(SM)$ implies more than just $\aabs{X_+u_k}\to0$.
Another new proof is presented in section~\ref{sec:alt}.

If we do not appeal to injectivity of~$X_+$, we may use lemma~\ref{lma:X+=X-} to argue that $X_+u_{m_0}=0$ implies $X_-u_{m_0+2}=0$.
This gives $Xu_k=0$ for all relevant values of~$k$ except $k=m+1$.
Indeed, the fact that also $u_{m+1}=0$ relies on the absence of trace-free conformal Killing tensors, whereas the vanishing of higher degrees does not.

\section{Regularity of integral functions}
\label{sec:reg}

\begin{proof}[Proof of lemma~\ref{lma:u-reg}]
Let us split~$f$ into even and odd parts as $f_\pm=\frac12(f\pm f\circ\reverse)$ with respect to reversion.
We have $f=f_++f_-$ and $f_\pm\circ\reverse=\pm f_\pm$.
We split the function~$u$ similarly into~$u_\pm$.

Consider any broken ray~$\gamma$ with endpoints on~$\outerbdy$.
Since~$u$ vanishes at~$\outerbdy$, it follows from~\eqref{eq:u-def} that~$f$ integrates to zero over~$\gamma$.
Similarly,~$f$ has zero integral over the reverse of the geodesic~$\gamma$.
This implies that both~$f_+$ and~$f_-$ integrate to zero over~$\gamma$.

The integral function of~$f_\pm$ as defined by~\eqref{eq:u-def} is~$u_\mp$.
The functions~$f_\pm$ and~$u_\mp$ satisfy the assumptions of the lemma.
We prove regularity for the two functions~$u_\pm$ separately.
We may thus assume that $u\circ\reverse=\pm u$.

Consider any $(x,v)\in SM$.
The broken rays~$\gamma_{x,\pm v}$ together form a broken ray with endpoints on~$\outerbdy$, and therefore
\begin{equation}
\label{eq:u+u-}
u(x,v)\pm u(x,-v)=0.
\end{equation}
Take any point $(x,v)\in\sisus SM$.
At most one of the two broken rays~$\gamma_{x,\pm v}$ has a tangential reflection because the geometry is admissible.
The boundary is strictly convex, so~$\gamma_{x,\pm v}$ meets~$\outerbdy$ transversally (non-tangentially).
As the boundary components~$\outerbdy$ and~$\innerbdy$ are smooth and the boundary is always met transversally, the broken rays starting near $(x,\pm v)$ depend smoothly on their initial data.
Since $f\in C^2(SM)$, this implies that~$u$ is~$C^2$ in a neighborhood of $(x,\pm v)$.
By~\eqref{eq:u+u-} the function~$u$ is also~$C^2$ in a neighborhood of $(x,v)$.

We have thus shown that $u\in C^2(\sisus SM)$.
If there are tangential reflections (possibly at the initial point)\ntr{Added this note to make it clearer that the difficulty is two-fold: There are issues if the flow meets $\innerbdy$ or $\outerbdy$ tangentially. Due to strict convexity the boundary component $\outerbdy$ can only be reached if we start tangent to it. (In that case the flow will reach the boundary in zero time.)} or $(x,v)$ is tangent to~$\outerbdy$, the broken ray flow is non-smooth but still continuous.
Therefore $u\in C(SM)$.
To show that~$u$ is Lipschitz, it suffices to show that the first order derivatives are uniformly bounded in~$\sisus SM$.

To this end, let $(-\eps,\eps)\ni s\mapsto(x_s,v_s)\in\sisus SM$ be a~$C^1$ unit speed curve on~$SM$ with $(x_0,v_0)=(x,v)$.
We have
\begin{equation}
\label{eq:der}
\begin{split}
\Der{s}u(x_s,v_s)
&=
f(\gamma_{x_s,v_s}(\tau_{x_s,v_s}),\dot\gamma_{x_s,v_s}(\tau_{x_s,v_s}))\Der{s}\tau_{x_s,v_s}
\\&\quad+
\int_0^{\tau_{x_s,v_s}}\Der{s}f(\gamma_{x_s,v_s}(t),\dot\gamma_{x_s,v_s}(t))\der t.
\end{split}
\end{equation}
We wish to show that the derivative~\eqref{eq:der} is bounded at $s=0$, uniformly for all choices of the curve on~$SM$ and the point $(x,v)\in\sisus SM$.
Let $J=\Der{s}\gamma_{x_s,v_s}$ be the Jacobi field corresponding to our variation of the broken ray.

We first study the second (integral) term.
The integrand in~\eqref{eq:der} is bounded by a multiple of $(\abs{J}^2+\abs{\sdot J}^2)^{1/2}\abs{\nabla_{SM}f}$.
Arguing with~\eqref{eq:u+u-} again, we may assume that $\gamma_{x,v}$ contains no reflections with $\abs{\ip{\dot\gamma}{\nu}}<a$.
As shown in~\cite[Corollary 16]{IS:brt}, there is a uniform bound $\abs{J}^2+\abs{\sdot J}^2\leq C_1$ in this case for the same constant $C_1$ for all broken rays on the manifold.
Therefore the integrand is uniformly bounded.

We then turn to the first (boundary) term.
Applying the inverse function theorem to a boundary defining function gives $\Der{s}\tau_{x_s,v_s}|_{s=0}=-\ip{J}{\nu}/\ip{\dot\gamma_{x,v}}{\nu}|_{t=\tau_{x,v}}$.
\ntr{Edited the previous sentence to give more details on the calculation.}
This is uniformly bounded outside any neighborhood of~$\outerbdy$.
It remains to analyze this term for short geodesics which are almost tangent to~$\outerbdy$ and do not reach~$\innerbdy$.

The function $u(x,v)$ vanishes whenever $x\in\outerbdy$.
Therefore $f(x,v)=-Xu(x,v)=0$ whenever $x\in\outerbdy$ and $v\in S_x\outerbdy$; in this case~$Xu$ is a derivative along the boundary.
As the function~$f$ is Lipschitz, we have $\abs{f(x,v)}\leq C_2\abs{\ip{\nu}{v}}$ for all $x\in\outerbdy$ and $v\in S_xM$ for some uniform constant $C_2$.
The derivative $\Der{s}\tau_{x_s,v_s}|_{s=0}$ is bounded uniformly by $C_1/\abs{\ip{\nu}{v}}$, so the first term is bounded by $\aabs{f}_{L^\infty}C_1C_2$.

Both terms in~\eqref{eq:der} are uniformly bounded, and this concludes the proof.
\end{proof}

The estimates obtained above can be improved.
For example, Jacobi fields along almost tangential geodesics are small because the geodesics are short.
This shows that the derivatives at~$\outerbdy$ are not only bounded, but go to zero.

\begin{proof}[Proof of lemma~\ref{lma:uk-reg}]
The function~$u_k$ is obtained from~$u$ by projecting to a fixed spherical harmonic degree fiber by fiber.
It is easy to see that this preserves $C^2$-regularity in the interior and Lipschitz-regularity on the whole~$SM$.
\end{proof}

\section{A Pestov identity with boundary terms}
\label{sec:pestov}

\subsection{The first Pestov identity}

%We begin disregarding regularity and symmetry properties at the boundary.
We disregard regularity and symmetry properties at the boundary first.
The resulting first Pestov identity will serve as a stepping stone towards the estimates we need.

\begin{proof}[Proof of lemma~\ref{lma:pestov-smooth}]
We first assume $u\in C^4(SM)$.
Writing the norms as inner products and integrating by parts gives
\begin{equation}
\begin{split}
%&
\norm{\vd X u}^2 - \norm{X\vd u}^2 
&
= \iip{(X \vdiv \vd X - \vdiv X X \vd)u}{u}
\\&\quad
-\iip{\vnu X\vd u}{\vd u}_{\pSM}
%\\&\qquad
-\iip{\vnu\vdiv\vd Xu}{u}_{\pSM}
.
\end{split}
\end{equation}
The commutator formulas can be used to simplify the resulting operator:
\begin{equation}
\begin{split}
X \vdiv \vd X - \vdiv X X \vd
&=
-\hdiv \vd X + \vdiv X \hd
\\&=
- \hdiv \vd X + \vdiv \hd X + \vdiv R \vd
\\&=
-(n-1)X^2+\vdiv R \vd.
\end{split}
\end{equation}
Integrating by parts again leads us to
\begin{equation}
\begin{split}
&
\norm{\vd X u}^2 - \norm{X\vd u}^2 
%&
=
(n-1) \norm{Xu}^2
-\iip{R \vd u}{\vd u}
\\&\qquad
%\\&\quad
-(n-1)\iip{\vnu Xu}{u}_{\pSM}
%\\&\quad
-\iip{\vnu X\vd u}{\vd u}_{\pSM}
\\&\qquad
-\iip{\vnu\vdiv\vd Xu}{u}_{\pSM}
.
\end{split}
\end{equation}
The interior terms are as claimed, and it remains to simplify the boundary terms.

To this end we write\ntr{Changed $v$ to $\vnu$ on the second line.}
\begin{equation}
\begin{split}
&-\iip{\vnu\vdiv \vd Xu}{u}_{\pSM}
=\iip{\vd Xu}{\vd(\vnu u)}_{\pSM}
\\&\quad
=\iip{\vd Xu}{\vd(\vnu)u\vnu\vd u}_{\pSM}
\\&\quad
=-\iip{Xu}{\vdiv[\vd(\vnu)u]}_{\pSM}
+\iip{\vd Xu}{\vnu\vd u}_{\pSM}
\\&\quad
=(n-1)\iip{Xu}{\vnu u}_{\pSM}
%\\&\qquad
-\iip{Xu}{\ip{\vd(\vnu)}{\vd u}}_{\pSM}
\\&\qquad
+\iip{\vd Xu}{\vnu\vd u}_{\pSM}.
\end{split}
\end{equation}
Noticing that $\ip{\vd(\vnu)}{\vd u}=\ip{\nu}{\vd u}$, we find that the boundary terms become
\begin{equation}
\begin{split}
&\iip{\vd Xu}{\vnu\vd u}_{\pSM}
-\iip{X\vd u}{\vnu\vd u}_{\pSM}
%\\&\qquad
-\iip{Xu}{\ip{\nu}{\vd u}}_{\pSM}
\\&\quad
=
\iip{(\vd X-X\vd)u}{\vnu\vd u}_{\pSM}
%\\&\qquad
-\iip{\nu Xu}{\vd u}_{\pSM}
\\&\quad
=
\iip{\vnu\hd u-\nu Xu}{\vd u}_{\pSM}
\end{split}
\end{equation}
as desired.
The result for $u\in C^2(SM)$ follows from a simple approximation arguments; all terms in the final identity contain only derivatives up to order two.
\end{proof}

\subsection{Boundary terms with symmetry}
\label{sec:bdy-terms}

To prove lemma~\ref{lma:bdy-term} concerning the boundary terms of the Pestov identity, we first collect some auxiliary results.

Given $(x,v)\in\pSM$, we can represent $T_{(x,v)}\pSM$ in the horizontal and vertical splitting
as $(\xi_{H},\xi_{V})$, where $\xi_{H}\in T_{x}\partial M$ and~$\xi_{V}$ is orthogonal to~$v$.
Then a simple calculation shows that
\begin{equation}
%\begin{split}
\der\reflect_{(x,v)}(\xi_{H},\xi_{V})
%&
=
%(\xi_{H},
%\xi_{V}
%-2\ip{\xi_{V}}{\nu}\nu
%-2\ip{v}{\nabla_{\xi_{H}}\nu}\nu
%-2\vnu\nabla_{\xi_{H}}\nu
%)
%\\
%&=
(\xi_{H},
\reflect_{x}(\xi_{V})
-2\ip{v,\nabla_{\xi_{H}}\nu}\nu
-2\vnu\nabla_{\xi_{H}}\nu
).
%\end{split}
\end{equation}
Let us denote by~$v^{\parallel}$ the orthogonal projection of~$v$ onto~$T_{x}\partial M$ and similarly for other vectors.
Note that since~$\nu$ has norm~$1$, we have $\ip{v}{\nabla_{\xi_{H}}\nu}=\ip{v^{\parallel}}{\nabla_{\xi_{H}}\nu}$, so we can write the formula for~$\der\reflect$ as
\begin{equation}
\der\reflect_{(x,v)}(\xi_{H},\xi_{V})
=
(\xi_{H},
\reflect_{x}(\xi_{V})
-2\ip{v^{\parallel}}{\nabla_{\xi_{H}}\nu}\nu
-2\vnu\nabla_{\xi_{H}}\nu
).
\label{eq:rho}
\end{equation}
It is also useful to compute the adjoint of~$\der\reflect_{(x,v)}$ with respect to the Sasaki metric.
Using that $(a,b)\mapsto\ip{a}{\nabla_{b}\nu}$ is a symmetric form (second fundamental form) we find
\begin{equation}
\der\reflect_{(x,v)}^{*}(\xi_{H},\xi_{V})
=
(
\xi_{H}
-2\ip{\nu}{\xi_{V}}\nabla_{v^{\parallel}}\nu
-2\vnu \nabla_{\xi_{V}^{\parallel}}\nu
,\reflect_{x}(\xi_{V})).
\label{eq:rho*}
\end{equation}
In addition, a simple vertical calculation shows that
\begin{equation}
\label{eq:vrho}
\vd(u\circ\reflect)(x,v)
=
\reflect_{x}((\vd u)\circ\reflect(x,v)).
\end{equation}
We will next move to horizontal derivatives.

We would like to have some insight into the operator $Q=\vnu\ehd-\nu X$.
This operator can be rewritten in such a way that it acts on functions $u\in C^{\infty}(\pSM)$.
This rewriting is important to study the effect of~$\reflect$.

It is convenient to take components of differential operators parallel to~$\partial M$.
We define
\begin{equation}
\label{eq:d-par-def}
\nabla^{\parallel}u
\coloneqq
\nabla u-\ip{\nabla u}{(\nu,0)}(\nu,0).
\end{equation}
and
\begin{equation}
\label{eq:X-par-def}
X^{\parallel}
\coloneqq
X-\ip{X}{(\nu,0)}(\nu,0)
=
(v-\vnu\nu,0)
=
(v^{\parallel},0)
\end{equation}
at the boundary.
\ntr{Moved the definitions of the projected operators before the claim.}

\begin{lemma}
\label{lma:Q}
We have
\begin{equation}
%\vnu\ehd-\nu X
Q
=
\vnu d\pi\nabla^{\parallel}-\nu X^{\parallel}
.
\end{equation}
\end{lemma}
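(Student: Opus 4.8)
The plan is to verify the identity by a direct computation, expanding both sides in the horizontal--vertical splitting of $TSM$ at a boundary point and carefully tracking the projections. First I would rewrite the full Sasaki gradient in the form $\nabla u = (\ehd u,\vd u)$, where I have combined the $\R X$ and $\mathcal H$ factors of the splitting $TSM = \R X\oplus\mathcal H\oplus\mathcal V$ into a single copy of $T_xM$; under this bookkeeping the horizontal component is precisely the total horizontal gradient $\ehd u = vXu+\hd u$, while the vertical component remains $\vd u$. The one elementary identity I will use repeatedly is $\ip{\ehd u}{v}=Xu$, which holds because $\hd u$ is orthogonal to $v$ and $v$ is a unit vector.

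Next I would evaluate the two projected operators on the right-hand side. Since the correction term $\ip{\nabla u}{(\nu,0)}(\nu,0)$ in the definition~\eqref{eq:d-par-def} of $\nabla^{\parallel}$ is purely horizontal, and $d\pi$ annihilates the vertical subbundle, I obtain
\begin{equation}
d\pi\nabla^{\parallel}u = \ehd u-\ip{\ehd u}{\nu}\nu,
\end{equation}
the tangential projection of the total horizontal gradient to $T_x(\partial M)$. For the second term, using $X^{\parallel}=(v^{\parallel},0)$ with $v^{\parallel}=v-\vnu\,\nu$ from~\eqref{eq:X-par-def} and pairing against $\nabla u$ in the Sasaki metric gives
\begin{equation}
X^{\parallel}u = \ip{\ehd u}{v^{\parallel}} = Xu-\vnu\ip{\ehd u}{\nu},
\end{equation}
again invoking $\ip{\ehd u}{v}=Xu$.

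Finally I would substitute these expressions into $\vnu\,d\pi\nabla^{\parallel}u-\nu X^{\parallel}u$ and observe that the two copies of $\vnu\ip{\ehd u}{\nu}\nu$ cancel, leaving exactly $\vnu\ehd u-\nu Xu$, which is $Qu$ by the definition~\eqref{eq:Q-def}. The computation is entirely routine; the only points requiring care are the identifications, namely remembering that the $\R X$ and horizontal factors together assemble into $\ehd u$, that $d\pi$ discards the vertical part so the vertical component of $\nabla^{\parallel}$ never enters the final expression, and that the scalar $\ip{\ehd u}{\nu}$ is the common quantity whose two appearances must cancel. I expect no genuine obstacle beyond this careful tracking of projections.
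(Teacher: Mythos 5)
Your proposal is correct and follows essentially the same route as the paper's proof: you derive the same two intermediate formulas $\der\pi\nabla^{\parallel}u=\ehd u-\ip{\ehd u}{\nu}\nu$ and $X^{\parallel}u=Xu-\vnu\ip{\ehd u}{\nu}$ and then cancel the common term $\vnu\ip{\ehd u}{\nu}\nu$. The only difference is presentational: you spell out the final cancellation and the identity $\ip{\ehd u}{v}=Xu$ explicitly, which the paper leaves implicit in ``and the claim follows.''
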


\begin{proof}
The operator~$\ehd$ is just~$\der\pi\nabla$, so applying~$\der\pi$ to the projected operator of~\eqref{eq:d-par-def} we derive
\begin{equation}
\der\pi\nabla^{\parallel}u
=
\ehd u-\ip{\ehd u}{\nu}\nu.
\end{equation}
Similarly, we also project $X=(v,0)$ using~\eqref{eq:X-par-def} and find
\begin{equation}
X^{\parallel}u
=
Xu-\vnu \ip{\ehd u}{\nu}
\end{equation}
and the claim follows.
\ntr{Reworded the proof now that the definitions are before the claim.}
\end{proof}

From this form we can clearly see that~$Q$ acts on ~$C^{\infty}(\pSM)$.
The next two lemmas study the composition of each~$\der\pi\nabla^{\parallel}$ and~$X^{\parallel}$ with~$\reflect$.

\begin{lemma}
\label{lemma:hrho}
We have
\begin{equation}
%\begin{split}
\der\pi\nabla^{\parallel}(u\circ\reflect)
%&
=
(\der\pi\nabla^{\parallel} u)\circ\reflect
-2\ip{\nu}{(\vd u)\circ\reflect}\nabla_{v^{\parallel}}\nu
%\\&\qquad
-2\vnu\nabla _{[(\vd u)\circ\reflect]^{\parallel}}\nu.
%\end{split}
\end{equation}
\end{lemma}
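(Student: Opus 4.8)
The plan is to recognize $\der\pi\nabla^{\parallel}$ as the horizontal component of the intrinsic gradient on the boundary $\pSM$, and then to differentiate the composition $u\circ\reflect$ by the chain rule for gradients, which produces precisely the adjoint $\der\reflect^{*}$ already recorded in~\eqref{eq:rho*}. The first step is to observe that $(\nu,0)$ is the unit outer normal of $\pSM$ with respect to the Sasaki metric: its horizontal lift is orthogonal both to the horizontal lifts of $T_x\partial M$ (since $\nu\perp T_x\partial M$ in $M$) and to every vertical vector, and these span $T_{(x,v)}\pSM$. Consequently the projected operator $\nabla^{\parallel}$ of~\eqref{eq:d-par-def} is exactly the tangential (intrinsic) gradient of $u|_{\pSM}$, and for the smooth map $\reflect\colon\pSM\to\pSM$ the chain rule for gradients reads
\[
\nabla^{\parallel}(u\circ\reflect)_{(x,v)}
=
\der\reflect_{(x,v)}^{*}\big[(\nabla^{\parallel}u)(\reflect(x,v))\big],
\]
where $\der\reflect^{*}$ is the adjoint of $\der\reflect_{(x,v)}\colon T_{(x,v)}\pSM\to T_{\reflect(x,v)}\pSM$ in the induced Sasaki metric, i.e.\ the operator of~\eqref{eq:rho*}.

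Next I would decompose $\nabla^{\parallel}u$ in the $(\xi_{H},\xi_{V})$ splitting of $T\pSM$. Since $(\nu,0)$ has no vertical component, subtracting $\ip{\nabla u}{(\nu,0)}(\nu,0)$ leaves the vertical part of $\nabla u$ untouched, so the vertical slot of $\nabla^{\parallel}u$ is $\vd u$, while the horizontal slot is $\der\pi\nabla^{\parallel}u=\ehd u-\ip{\ehd u}{\nu}\nu$, exactly as computed in the proof of lemma~\ref{lma:Q}. Because $\reflect$ fixes the base point $x$, the vector $(\nabla^{\parallel}u)(\reflect(x,v))$ is then represented by the pair
\[
(\xi_{H},\xi_{V})=\big((\der\pi\nabla^{\parallel}u)\circ\reflect,\;(\vd u)\circ\reflect\big),
\]
both slots living in the correct fibers over $x$.

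Finally I would substitute this pair into~\eqref{eq:rho*} and read off the horizontal component by applying $\der\pi$, which annihilates the vertical slot $\reflect_{x}(\xi_{V})$ and returns the first slot unchanged. This gives directly
\[
\der\pi\nabla^{\parallel}(u\circ\reflect)
=
(\der\pi\nabla^{\parallel}u)\circ\reflect
-2\ip{\nu}{(\vd u)\circ\reflect}\nabla_{v^{\parallel}}\nu
-2\vnu\nabla_{[(\vd u)\circ\reflect]^{\parallel}}\nu,
\]
which is the asserted formula. The one point demanding care is the metric bookkeeping for the adjoint in~\eqref{eq:rho*}: one must make sure that $\der\reflect$ is treated as an endomorphism of $T\pSM$ equipped with the \emph{induced} Sasaki metric (rather than of $T(SM)$), so that the chain-rule adjoint and~\eqref{eq:rho*} genuinely coincide and the output lands back in $T_{(x,v)}\pSM$. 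Once this identification is pinned down, the remainder is a direct substitution together with the vertical chain rule~\eqref{eq:vrho} if one prefers to also re-express $(\vd u)\circ\reflect$.
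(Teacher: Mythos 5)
Your proposal is correct and takes essentially the same route as the paper: the paper's proof consists precisely of the chain rule $\nabla^{\parallel}(u\circ\reflect)=\der\reflect^{*}((\nabla^{\parallel}u)\circ\reflect)$ followed by substituting the vertical slot $(\nabla^{\parallel}u)_{V}=\vd u$ into~\eqref{eq:rho*} and reading off the horizontal component. Your extra verifications --- that $(\nu,0)$ is the Sasaki unit normal so that $\nabla^{\parallel}$ is the intrinsic gradient of $u|_{\pSM}$, and that the adjoint must be taken in the induced metric on $T\pSM$ --- simply make explicit the bookkeeping the paper leaves implicit.
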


\begin{proof}
%We first note
The chain rule gives
\begin{equation}
\nabla^{\parallel}(u\circ\reflect)
=
\der\reflect^{*}((\nabla^{\parallel}u)\circ\reflect).
\end{equation}
Since $(\nabla^{\parallel}u)_{V}=\vd u$, formula~\eqref{eq:rho*} proves the lemma.
\end{proof}

\begin{lemma}
\label{lemma:xrho}
We have
\begin{equation}
\begin{split}
X^{\parallel}(u\circ\reflect)
&=
(X^{\parallel}u)\circ\reflect
-2\ip{v^{\parallel}}{\nabla_{v^{\parallel}}\nu}\ip{(\vd u)\circ\reflect}{\nu}
\\&\qquad
-2\vnu\ip{[(\vd u)\circ\reflect]^{\parallel}}{\nabla_{v^{\parallel}}\nu}.
\end{split}
\end{equation}
\end{lemma}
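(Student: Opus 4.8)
The plan is to reduce the statement to the horizontal identity of lemma~\ref{lemma:hrho}, using that $X^{\parallel}$ is just the pairing of the projected horizontal gradient against the tangential velocity. The linchpin is the identity
\begin{equation}
X^{\parallel}w
=
\ip{\der\pi\nabla^{\parallel}w}{v^{\parallel}},
\end{equation}
valid for any $w\in C^{\infty}(\pSM)$. This is read off from the computation in the proof of lemma~\ref{lma:Q}: there $X^{\parallel}w=\ip{\ehd w}{v^{\parallel}}$ and $\der\pi\nabla^{\parallel}w=\ehd w-\ip{\ehd w}{\nu}\nu$, and the correction term disappears upon pairing with~$v^{\parallel}$ because $v^{\parallel}\perp\nu$. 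Taking $w=u\circ\reflect$ rewrites the left-hand side of the lemma as $\ip{\der\pi\nabla^{\parallel}(u\circ\reflect)}{v^{\parallel}}$, so that lemma~\ref{lemma:hrho} can be substituted directly.

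After the substitution I would pair each of the three resulting terms with~$v^{\parallel}$ and simplify. The first term $\ip{(\der\pi\nabla^{\parallel}u)\circ\reflect}{v^{\parallel}}$ equals $(X^{\parallel}u)\circ\reflect$; this rests on the geometric fact that reflection preserves the tangential velocity, $(\reflect_{x}v)^{\parallel}=v^{\parallel}$, which follows at once from $\reflect_{x}v=v-2\vnu\nu$ and $\ip{\reflect_{x}v}{\nu}=-\vnu$. For the other two terms I would use the symmetry of the second fundamental form $\ip{\nabla_{a}\nu}{b}=\ip{a}{\nabla_{b}\nu}$ — the same symmetry behind~\eqref{eq:rho*} — to reorder $\ip{\nabla_{v^{\parallel}}\nu}{v^{\parallel}}$ and $\ip{\nabla_{[(\vd u)\circ\reflect]^{\parallel}}\nu}{v^{\parallel}}$ into the form demanded by the statement; in the last term pairing against the tangential vector $\nabla_{v^{\parallel}}\nu$ automatically turns $(\vd u)\circ\reflect$ into its tangential projection.

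Once the linchpin identity is in place the argument is mechanical, so I do not anticipate a genuine obstacle. The only point needing care is the bookkeeping of base points in the first term: the gradient is evaluated at $\reflect(x,v)$ while the vector paired against it is $v^{\parallel}=(\reflect_{x}v)^{\parallel}$, so the expression really is $X^{\parallel}u$ evaluated at $\reflect(x,v)$. A parallel route, closer in spirit to the proof of lemma~\ref{lemma:hrho}, is to compute $\der\reflect(X^{\parallel})$ from~\eqref{eq:rho} with $(\xi_{H},\xi_{V})=(v^{\parallel},0)$ and then pair its horizontal and vertical components against $\ehd u$ and $\vd u$ at $\reflect(x,v)$; this yields the same three terms and provides a consistency check.
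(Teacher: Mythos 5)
Your proposal is correct, but your primary route is not the paper's. The paper proves lemma~\ref{lemma:xrho} exactly as it proves lemma~\ref{lemma:hrho}: apply the chain rule to $u\circ\reflect$ and substitute $(\xi_H,\xi_V)=(v^{\parallel},0)$ into the formula~\eqref{eq:rho} for $\der\reflect$ --- this is precisely the ``parallel route'' you relegate to a consistency check at the end. Your main argument instead derives the lemma as a corollary of lemma~\ref{lemma:hrho}, contracting it with $v^{\parallel}$ via the linchpin identity $X^{\parallel}w=\ip{\der\pi\nabla^{\parallel}w}{v^{\parallel}}$, which is indeed valid: it drops out of the proof of lemma~\ref{lma:Q} because $v^{\parallel}\perp\nu$ kills the normal correction in $\der\pi\nabla^{\parallel}w=\ehd w-\ip{\ehd w}{\nu}\nu$ and $\ip{\ehd w}{v^{\parallel}}=Xw-\vnu\ip{\ehd w}{\nu}=X^{\parallel}w$. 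The subsequent verifications all check out: $(\reflect_x v)^{\parallel}=v^{\parallel}$ settles the base-point bookkeeping in the first term, the second term needs only symmetry of the inner product, and the third uses the symmetry of the second fundamental form on the tangential pair $[(\vd u)\circ\reflect]^{\parallel}$ and $v^{\parallel}$, together with the fact that $\nabla_{v^{\parallel}}\nu$ is tangential since $\abs{\nu}=1$. What your route buys is economy and structure: it exposes lemma~\ref{lemma:xrho} as a scalar contraction of lemma~\ref{lemma:hrho}, so only one computation with the differential of $\reflect$ is ever needed (namely~\eqref{eq:rho*}, already consumed inside lemma~\ref{lemma:hrho}), whereas the paper treats the two lemmas symmetrically, spending~\eqref{eq:rho} on one and~\eqref{eq:rho*} on the other at the cost of a second chain-rule computation. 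The two derivations produce identical terms, as your consistency check confirms.
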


\begin{proof}
The proof is very similar to the previous lemma but one now uses~\eqref{eq:rho}.
\end{proof}

We are now ready to prove the lemma about boundary terms with reflection symmetry.

\begin{proof}[Proof of lemma~\ref{lma:bdy-term}]
We will show that for any $u\in C^{\infty}(\pSM)$ we have
\begin{equation}
\label{eq:PH}
P(u\circ\reflect,u\circ\reflect)
=
-P(u,u)
-2H(u,u)
%\begin{split}
%(Q(u\circ\reflect),\vd(u\circ\reflect))_{\pSM}
%=
%-(Qu,\vd u)_{\pSM}-2\int_{\pSM}\Pi_{x}(Tu(x,v),Tu(x,v))\,d\Sigma^{2n-2}.
%\end{split}
.
\end{equation}
By simple approximation the same holds for $u\in C^2$
If $u\circ\reflect=\pm u$, then~\eqref{eq:PH} gives $2P(u,u)=-2H(u,u)$, which proves the lemma.

Recall that
\begin{equation}
Tu(x,v)
=
\ip{\vd u}{\nu} v-\vnu\vd u
=
v^{\parallel}\ip{\nu}{\vd u}-\vnu (\vd u)^{\parallel}.
\end{equation}
We begin computing at $(x,v)$ using equation~\eqref{eq:vrho} and lemma~\ref{lma:Q}:
\begin{equation}
\begin{split}
&\ip{Q(u\circ\reflect)}{\vd(u\circ\reflect)}
=
\ip{Q(u\circ\reflect)}{\reflect_{x}((\vd u)\circ\reflect)}
\\
&\qquad=
\ip{\vnu\der\pi(\nabla^{\parallel}(u\circ\reflect))+X^{\parallel}(u\circ\reflect)\nu}{(\vd u)\circ\reflect}
,
\end{split}
\end{equation}
where we also used that $\reflect(\nu)=-\nu$ and~$\reflect$ fixes every vector in~$T_{x}\partial M$.

We now use lemmas~\ref{lemma:hrho} and~\ref{lemma:xrho} to obtain
\begin{equation}
\ip{Q(u\circ\reflect)}{\vd(u\circ\reflect)}
=
-\ip{(Qu)\circ\reflect}{(\vd u)\circ\reflect}+S,
\end{equation}
where $S$ is given by
\begin{equation}
\begin{split}
S
&\coloneqq
-4\vnu\ip{\nu}{(\vd u)\circ\reflect}\Pi\left(v^{\parallel},[(\vd u)\circ\reflect]^{\parallel}\right)
\\&\quad
-2\vnu^2 \Pi\left([(\vd u)\circ\reflect]^{\parallel},[(\vd u)\circ\reflect]^{\parallel}\right)
%\\&\quad
-2\ip{\nu}{(\vd u)\circ\reflect}^{2}\Pi\left(v^{\parallel},v^{\parallel}\right).
\end{split}
\end{equation}
We can rearrange~$S$ so that
\begin{equation}
\begin{split}
S
%&
=
-2\Pi
\bigg(
&
\ip{\nu}{(\vd u)\circ\reflect} v^{\parallel}
+\vnu[(\vd u)\circ\reflect]^{\parallel},
%\\&\qquad\qquad
\\&
\ip{\nu}{(\vd u)\circ\reflect} v^{\parallel}
+\vnu[(\vd u)\circ\reflect]^{\parallel}
\bigg)
.
\end{split}
\end{equation}
Note that
\begin{equation}
S\circ\reflect=\Pi(Tu,Tu).
\end{equation}
To complete the proof of~\eqref{eq:PH} we just need to observe that since~$\reflect_{x}$ is an isometry of each fibre~$S_{x}M$, for any function~$F$ we have
\begin{equation}
\int_{\pSM}F\circ\reflect\der\Sigma^{2n-2}
=
\int_{\pSM}F\der\Sigma^{2n-2}.
\end{equation}
This concludes the proof of~\eqref{eq:PH} and also of the lemma.
\end{proof}

\subsection{Lowering boundary regularity}

We need to be able to apply the Pestov identity in a situation where~$u$ is not~$C^2$ up to~$\pSM$.
We apply an approximation argument.

\begin{proof}[Proof of lemma~\ref{lma:pestov-lip}]
We extend our manifold: Let~$\tilde M$ be a smooth and compact Riemannian manifold with boundary, satisfying $M\subset\sisus\tilde M$.
We can extend~$u$ to a function $S\tilde M\to\R$ satisfying $u\in C^2(\sisus SM)\cap\Lip(S\tilde M)$ and having compact support in $\sisus S\tilde M$.
Let $(u^j)_{j=1}^\infty$ be a sequence of mollifications of~$u$, defined by a smooth partition of unity and the standard convolution method on the Euclidean space via coordinate charts.
We restrict the functions~$u^j$ to~$SM$.

We apply lemma~\ref{lma:pestov-smooth} to~$u^j$, obtaining
\begin{equation}
\label{eq:mollified-pestov}
\begin{split}
\aabs{\vgrad Xu^j}^2_{L^2(SM)}
&=
\aabs{X\vgrad u^j}^2_{L^2(SM)}
-\iip{R\vgrad u^j}{\vgrad u^j}_{L^2(SM)}
\\&\quad
+(n-1)\aabs{Xu^j}^2_{L^2(SM)}
%\\&\quad
+P_\outerbdy(u^j,u^j)
+P_\innerbdy(u^j,u^j).
\end{split}
\end{equation}
We will study the behaviour of the terms as $j\to\infty$.

Let us look into the boundary terms first.
Recall that $u\in\Lip(\pSM)\subset H^1(\pSM)$.
\ntr{Added a sentence to recall the space $u$ is in over $\pSM$.}
By basic properties of mollifiers, $u^j\to u$ in $\Lip(\pSM)$ and therefore also in~$H^1(\pSM)$.
Since~$u$ vanishes at~$\outerbdy$ and~$P_\outerbdy$ contains only first order derivatives, this implies that $P_\outerbdy(u^j,u^j)\to0$ as $j\to\infty$.

The term at~$\innerbdy$ will not vanish in general.
We split~$u^j$ into even and odd parts with respect to reflection: $u^j_e=\frac12(u^j+u^j\circ\reflect)$ and $u^j_o=\frac12(u^j-u^j\circ\reflect)$, so that $u^j=u^j_e+u^j_o$.
We may write
\begin{equation}
P_\innerbdy(u^j,u^j)
=
P_\innerbdy(u^j_e,u^j_e)
+
P_\innerbdy(u^j_o,u^j_o)
+
P_\innerbdy(u^j_e,u^j_o)
+
P_\innerbdy(u^j_o,u^j_e).
\end{equation}
The limit function satisfies
\begin{equation}
\label{eq:u-bdy-parity}
u\circ\reflect=\pm u
\quad\text{at }
\pSM
\end{equation}
\ntr{Promoted this equation to a numbered one to be able to refer to it.}
by assumption, so the cross terms $P_\innerbdy(u^j_e,u^j_o)$ and $P_\innerbdy(u^j_o,u^j_e)$ vanish in the limit $j\to\infty$, and so does one of the other two terms.

The function~$u$ is even or odd at the boundary depending on the sign of~\eqref{eq:u-bdy-parity}.
\ntr{Added an explanatory sentence and changed the paragraph structure to make the argument clearer.}
Suppose that~$u$ is even at the boundary; the odd case is similar.
By lemma~\ref{lma:bdy-term} we have $P_\innerbdy(u^j_e,u^j_e)=-H_\innerbdy(u^j_e,u^j_e)$.
By the~$H^1$ convergence at the boundary, we get $P_\innerbdy(u^j,u^j)\to-H_\innerbdy(u,u)$.

Similar considerations with~$H^1$ convergence in the interior show that $\aabs{Xu^j}\to\aabs{Xu}$ and $\iip{R\vgrad u^j}{\vgrad u^j}\to\iip{R\vgrad u}{\vgrad u}$.

Let us then turn to the second order terms.
By assumption $\vgrad Xu\in L^2$.
Therefore the mollifications converge in this space: $\vgrad Xu^j\to\vgrad Xu$ in~$L^2$.

We have the commutator formula $X\vgrad u=\vgrad Xu-\hgrad u$.
Both terms are in~$L^2(SM)$, so $X\vgrad u\in L^2$ and we have $L^2$-convergence for the second second order term.

We may now study the limit $j\to\infty$ of the Pestov identity~\eqref{eq:mollified-pestov}.
We have
\begin{equation}
\begin{split}
\aabs{\vgrad Xu^j}^2
&\to
\aabs{\vgrad Xu}^2,
\\
\aabs{X\vgrad u^j}^2
&\to
\aabs{X\vgrad u}^2,
\\
\iip{R\vgrad u^j}{\vgrad u^j}
&\to
\iip{R\vgrad u}{\vgrad u},
\\
\aabs{Xu^j}^2
&\to
\aabs{Xu}^2,
\\
P_\outerbdy(u^j,u^j)
&\to
0,\quad\text{and}
\\
P_\innerbdy(u^j,u^j)
&\to
P_\innerbdy(u,u).
\end{split}
\end{equation}
This gives the first claim.

For the second one, observe that by concavity of~$\innerbdy$ we have $H_\innerbdy(u,u)\leq0$ and by non-positive sectional curvature $\ip{Rw}{w}\leq0$.
This gives the second claim.
\end{proof}

The approximation argument used in~\cite{IS:brt} was based on shrinking the manifold instead of mollifying the functions.
When working with functions of a fixed degree instead of the whole~$u$, we find the mollification argument more tractable.

\section{Properties of $X_\pm$}
\label{sec:X+-}

\subsection{$L^2$ estimates for derivatives on the sphere bundle}

We begin by estimating~$X_\pm$ in terms of the other horizontal derivatives~$X$ and~$\hgrad$.

\begin{lemma}
\label{lma:horizontal-L2}
Let~$M$ be a complete and smooth Riemannian manifold, compact or non-compact, with or without boundary.
If $u\in H^1(SM)$, then $X_\pm u\in L^2(SM)$ and
\begin{equation}
\aabs{X_+u}^2
+
\aabs{X_-u}^2
\leq
\aabs{Xu}^2
+
\aabs{\hgrad u}^2.
\end{equation}
\end{lemma}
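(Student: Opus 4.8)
The plan is to prove the pointwise (fiberwise) inequality on each sphere $S_xM$ and then integrate over $M$. The key structural fact is the decomposition $X = X_+ + X_-$, where $X_\pm$ raises or lowers spherical harmonic degree by one. The natural starting point is to relate the full horizontal operator $\ehgrad u = vXu + \hgrad u$ to $X_\pm$. Indeed, since $\ehgrad$ is the total horizontal gradient and $X$ is its ``$v$-component'', I expect a clean algebraic identity expressing $X_\pm u$ in terms of $\ehgrad u$ and the vertical structure. The crucial observation is that $\|Xu\|^2 + \|\hgrad u\|^2 = \|\ehgrad u\|^2$, because $\ehgrad u = vXu + \hgrad u$ with $v \perp \hgrad u$ (as $\hgrad u \in \{v\}^\perp$) and $|v| = 1$; this is a pointwise orthogonal splitting on each fiber.

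First I would establish, fiber by fiber, the explicit action of $X_\pm$ on a function of pure degree $k$. Following the Guillemin--Kazhdan calculus (as in~\cite{GK:n,PSU:Beurling}), on a function $u_k \in \Lambda^k$ one writes $X u_k = X_+ u_k + X_- u_k$ with $X_+ u_k \in \Lambda^{k+1}$ and $X_- u_k \in \Lambda^{k-1}$. The standard identities relate these to $\ehgrad$: projecting $\ehgrad u_k$ onto degrees $k+1$ and $k-1$ recovers $X_\pm u_k$ up to explicit constants, and a Pythagorean relation on the fiber gives $\|X_+ u_k\|^2 + \|X_- u_k\|^2 \le \|\ehgrad u_k\|^2$ on each $S_xM$. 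The inequality rather than equality arises because $\ehgrad u_k$ may also contain contributions that are not captured by the degree-shifting pieces, but since $X_\pm$ shift degree by exactly one and are the only such components coming from $X$, I expect the bound to follow from orthogonality of distinct degrees in $L^2(S_xM)$.

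Then I would assemble the global estimate. Writing $u = \sum_k u_k$ and using that $X_+ u_k$ and $X_+ u_l$ land in distinct degrees $k+1 \ne l+1$ when $k \ne l$, the cross terms vanish under the $L^2(S_xM)$ inner product, so $\|X_+ u\|^2 = \sum_k \|X_+ u_k\|^2$ and similarly for $X_-$. Combining with the fiberwise bound and summing over $k$ gives $\|X_+ u\|^2 + \|X_- u\|^2 \le \|\ehgrad u\|^2$ on each fiber; integrating over $M$ with respect to the volume form and invoking $\|\ehgrad u\|^2 = \|Xu\|^2 + \|\hgrad u\|^2$ yields the claimed inequality. The hypothesis $u \in H^1(SM)$ guarantees $Xu, \hgrad u \in L^2(SM)$, so the right-hand side is finite; this is precisely what certifies $X_\pm u \in L^2(SM)$ as asserted, and no boundary terms enter because the argument is purely algebraic and fiberwise, requiring no integration by parts over $M$.

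The main obstacle will be pinning down the exact degree-shifting constants and verifying that the sum of the squared $X_\pm$-norms is genuinely bounded by $\|\ehgrad u_k\|^2$ rather than exceeding it; this requires care with the normalization conventions for the decomposition $X = X_+ + X_-$ and with the precise relationship between $X_\pm u_k$ and the degree-$(k\pm1)$ projections of $\ehgrad u_k$. Once the fiberwise identity is correct, the density argument to pass from smooth $u$ to general $u \in H^1(SM)$ is routine, since every term involves only first-order derivatives and $X_\pm$ depend continuously on $u$ in the $H^1$ topology.
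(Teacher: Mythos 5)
Your overall route --- reduce everything to fiberwise spherical-harmonic algebra, use $\aabs{Xu}^2+\aabs{\hgrad u}^2=\aabs{\ehgrad u}^2$, and avoid any integration by parts so that no geometric or boundary assumptions enter --- is the right one: the paper itself gives no argument for this lemma but cites \cite[lemma 5.1]{LRS:CH} (building on \cite{PSU:Beurling}), and that proof is of exactly this fiberwise, geometry-free type. Two of your steps are in fact cleaner than you anticipate: $X_\pm u_k$ are precisely the degree-$(k\pm1)$ projections of $Xu_k=\ip{v}{\ehgrad u_k}$, with no constants to pin down (note it is $Xu_k$, not $\ehgrad u_k$, that gets projected --- $\ehgrad u_k$ is itself of degree $k$), and for pure degree the fiberwise bound is immediate: since $Xu_k$ lies in $\Lambda^{k+1}\oplus\Lambda^{k-1}$, one has the exact Pythagorean identity $\aabs{X_+u_k}^2+\aabs{X_-u_k}^2=\aabs{Xu_k}^2\leq\aabs{Xu_k}^2+\aabs{\hgrad u_k}^2=\aabs{\ehgrad u_k}^2$, with no analysis of ``uncaptured contributions'' needed.

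There is, however, one genuine gap, and it sits at the load-bearing step. When you sum over $k$, the orthogonality you invoke ($X_\pm u_k$ landing in distinct degrees) handles only the left-hand side; on the right you silently use $\sum_k\aabs{\ehgrad u_k}^2\leq\aabs{\ehgrad u}^2$, and this is not automatic for a first-order operator. It fails for $X$ alone: $\sum_k\aabs{Xu_k}^2=\sum_k\bigl(\aabs{X_+u_k}^2+\aabs{X_-u_k}^2\bigr)$ is exactly the quantity you are trying to bound, and it differs from $\aabs{Xu}^2$ by the cross terms $-2\sum_m\iip{X_+u_{m-1}}{X_-u_{m+1}}$ --- controlling these cross terms is the entire content of the lemma, so assuming the summed bound for the right-hand side without justification comes close to circularity. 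What saves the argument, and what you must state and prove, is that $\ehgrad$ preserves the vertical degree: in a parallel orthonormal frame each component of $\ehgrad u_k(x,\dummy)$ lies in $\Lambda^k_x$, because horizontal differentiation is differentiation along parallel transport, which acts on the fibers $S_xM$ by isometries and hence commutes with the vertical Laplacian. This makes the $\ehgrad u_k$ mutually orthogonal fiberwise, so in fact $\aabs{\ehgrad u}^2=\sum_k\aabs{\ehgrad u_k}^2$ with equality --- the cross terms of $X$ and of $\hgrad$ cancel each other --- and your summation then closes the proof. With this fact supplied (it is part of the \cite{PSU:Beurling} toolkit the paper cites), your argument is correct and coincides with the cited one, and the density step from smooth $u$ to $u\in H^1(SM)$ is, as you say, routine.
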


\begin{proof}
The proof can be found in~\cite[lemma 5.1]{LRS:CH}, which in turn is based on~\cite[lemmas 3.3, 3.5, and 4.4]{PSU:Beurling}.
None of these arguments rely on special geometric assumptions on the manifold.
\end{proof}

We point out that $\aabs{Xu}^2+\aabs{\hgrad u}^2=\aabs{\ehd u}^2$.

\begin{proof}[Proof of lemma~\ref{lma:X+-u-L2}]
By lemma~\ref{lma:u-reg} we have $u\in\Lip(SM)\subset H^1(SM)$.
Lemma~\ref{lma:horizontal-L2} then shows that $X_\pm u\in L^2(SM)$.
\end{proof}

\begin{proof}[Proof of lemma~\ref{lma:L2-reg}]
Since $f\in C^2(SM)$, the transport equation $Xu=-f$ gives $\vgrad Xu=-\vgrad f\in C^1(SM)\subset L^2(SM)$.

Fix then any $k\geq0$.
By lemma~\ref{lma:X+-u-L2} we have $X_\pm u\in L^2(SM)$ and therefore $(X_\pm u)_l\in L^2(SM)$ for any $l\geq0$.
Thus
\begin{equation}
Xu_k
=
(X_++X_-)u_k
=
(X_+u)_{k+1}
+
(X_-u)_{k-1}
\in
L^2(SM).
\end{equation}
For $(X_\pm u)_{k\pm1}$ we have (cf.~\eqref{eq:vgrad-uk})
\begin{equation}
\aabs{\vgrad(X_\pm u(x,v))_{k\pm1}}^2
=
(k\pm1)(k\pm1+n-2)
\aabs{(X_\pm u(x,v))_{k\pm1}}^2,
\end{equation}
whence $\vgrad(X_\pm u(x,v))_{k\pm1}\in L^2(SM)$ and so $\vgrad Xu_k\in L^2(SM)$.
\end{proof}

\subsection{The transport equation}

The next proof is based on the transport equation $Xu=-f$.
The proof is elementary but we record it here for clarity.

\begin{proof}[Proof of lemma~\ref{lma:X+=X-}]
Projecting the transport equation to degree $k+1$ gives
\begin{equation}
X_+u_k+X_-u_{k+2}=-f_{k+1}.
\end{equation}
Since~$f$ is a tensor field of order~$m$, we know that $f_l=0$ when $l>m$ or~$l$ and~$m$ have different parity.
Thus by the assumption of the lemma $f_{k+1}=0$ and the claim follows.
\end{proof}

\subsection{An estimate for the Beurling transform}

If~$X_-$ is surjective, then for any $f_k\in C^\infty(SM)$ of degree~$k$ there is a unique function $f_{k+2}\in C^\infty(SM)$ of degree $k+2$ so that $X_-f_{k+2}=-X_+f_k$ and~$f_{k+2}$ is orthogonal to the kernel of~$X_-$.
The corresponding mapping $f_k\mapsto f_{k+2}$ is called the Beurling transform.
We, however, do not need this transform.
The estimate we get below amounts to a continuity estimate for the Beurling transform but we have no need to formalize the transform itself in the present context.
For a detailed analysis of the Beurling transform and its use in tensor tomography, see~\cite{PSU:Beurling}.

\begin{proof}[Proof of lemma~\ref{lma:X-<X+}]
This proof is analogous to that of~\cite[Proposition 3.4]{PSU:Beurling}.
We apply lemma~\ref{lma:pestov-lip} to~$u_k$; this function satisfies the assumptions by lemmas~\ref{lma:uk-reg} and~\ref{lma:L2-reg}.
The function~$u$ vanishes at~$\outerbdy$ since~$f$ is in the kernel of the broken ray transform, so also~$u_k$ vanishes at~$\outerbdy$.
Estimate~\eqref{eq:pestov-estimate} gives
\begin{equation}
\label{eq:vv1}
\aabs{\vgrad Xu_k}^2
\geq
\aabs{X\vgrad u_k}^2
+(n-1)\aabs{Xu_k}^2.
\end{equation}
Using commutator formulas and vertical eigenvalue properties gives (cf.~\cite[Proof of Proposition 3.4]{PSU:Beurling})
\begin{equation}
\label{eq:vv2}
\begin{split}
\aabs{X\vgrad u_k}^2
&=
\aabs{\vgrad Xu_k}^2
-(n-1)\aabs{Xu_k}^2
+\aabs{\hgrad u_k}^2
\\&\quad
-(2k+n-1)\aabs{X_+u_k}^2
+(2k+n-3)\aabs{X_-u_k}^2
.
\end{split}
\end{equation}
Combining~\eqref{eq:vv1} and~\eqref{eq:vv2} gives
\begin{equation}
(2k+n-1)\aabs{X_+u_k}^2
\geq
\aabs{\hgrad u_k}^2
+(2k+n-3)\aabs{X_-u_k}^2.
\end{equation}
Using $\aabs{\hgrad u_k}^2\geq0$ gives the claimed estimate since $C(k,n)=\frac{2k+n-1}{2k+n-3}$.
\end{proof}

The constant~$C(k,n)$ in the estimate above is not sharp.
However, it is sufficient for us, so we trade optimality for convenience.
Sharper bounds for the Beurling transform can be found in~\cite{PSU:Beurling}.
With the better bounds the products $B(k,N,n)$ are uniformly bounded, but lemma~\ref{lma:prod-est} is strong enough for our theorem.

\subsection{Injectivity of $X_+$ and trace-free conformal Killing tensors}

Lemma~\ref{lma:X+inj} concerns injectivity of the operator~$X_+$.
It was only stated for $k\geq3$, as that was all we needed for the proof of theorem~\ref{thm:tensor}.
The proof relies on properties of conformal Killing tensors.
We give a new proof of the required result in proposition~\ref{prop:killing1}, making our proof of theorem~\ref{thm:tensor} more self-contained.
See section~\ref{sec:killing} and especially~\cite{DS:Killing} for more details on conformal Killing tensor fields.

\begin{proof}[Proof of lemma~\ref{lma:X+inj}]
If~$u$ has degree~$k$ and $X_+u=0$, then~$u$ is a trace-free conformal Killing tensor of rank~$k$.
%Moreover, since $k\geq1$, the tensor field is trace-free.
Ellipticity of~$X_+$ was proven in~\cite{GK:n}, and it follows that such tensor fields are smooth (as also observed in~\cite{DS:Killing}).
By~\cite[theorem 1.3]{DS:Killing} any trace-free conformal Killing tensor vanishing on an open subset of the boundary has to vanish identically.
\end{proof}

\subsection{An estimate for products of constants}

Iterating lemmas~\ref{lma:X+=X-} and~\ref{lma:X-<X+}, we end up with a product of the constants $C(k,n)$.
We therefore need an estimate for this product.

\begin{proof}[Proof of lemma~\ref{lma:prod-est}]
Using $\log(1+x)\leq x$ for $x>0$ we find
\begin{equation}
\label{eq:vv3}
\begin{split}
B(k,N,n)
&=
\prod_{l=1}^N C(k+2l,n)
\\&=
\exp\left(
\sum_{l=1}^N \log(C(k+2l,n))
\right)
\\&\leq
\exp\left(
\sum_{l=1}^N \frac{2}{2(k+2l)+n-3}
\right)
\\&=
\exp\left(
\frac12\sum_{l=1}^N \frac{1}{l+(2k+n-3)/4}
\right).
\end{split}
\end{equation}
A simple comparison of series and integrals gives
\begin{equation}
%\begin{split}
\sum_{l=1}^N \frac{1}{l+\alpha}
%&
\leq
\int_0^N \frac{\der x}{x+\alpha}
%\\&
=
\log(1+N/\alpha)
%\end{split}
\end{equation}
for any $\alpha>0$.
Using this with~\eqref{eq:vv3} gives the claimed estimate.
%\begin{equation}
%B(m,N,n)
%\leq
%\exp\left(
%\frac\12
%\log(1+\frac{4N}{2m+n-3})
%\right)
%\end{equation}
\end{proof}

\section{An alternative proof}
\label{sec:alt}

\subsection{Outline of proof}

As before, we begin by presenting the lemmas we need to prove the theorem.
The lemmas will be proved in section~\ref{sec:alt-lma-pf}.

\begin{lemma}
\label{lma:vP}
Let $(M,g)$ be a compact Riemannian manifold with or without boundary.
If $u\in C^2(\sisus SM)\cap\Lip(SM)$, $Xu$ has finite degree, and $\vgrad Xu,X\vgrad u\in L^2(SM)$, then
\begin{equation}
\aabs{X\vgrad u}^2
-
\aabs{\vgrad Xu}^2
+
(n-1)\aabs{Xu}^2
=
\iip{Xu}{[X,\Delta]u}
+
\aabs{\hgrad u}^2.
\end{equation}
\end{lemma}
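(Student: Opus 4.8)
The plan is to establish this as an essentially algebraic identity that follows from the commutator relations together with integration by parts, taking care that the regularity hypotheses ($u\in C^2(\sisus SM)\cap\Lip(SM)$, with $Xu$ of finite degree and $\vgrad Xu, X\vgrad u\in L^2$) are exactly what is needed to justify each integration by parts without boundary terms surviving. The key observation is that the claimed identity is a rearrangement of the commutator $[X,\Delta]=2\vdiv\hgrad+(n-1)X$ recorded in~\eqref{eq:[X,D]}. So the strategy is to start from $\iip{Xu}{[X,\Delta]u}$, substitute $\Delta=-\vdiv\vgrad$, and move derivatives around using the adjoint relations $X^*=-X$, $\vgrad^*=-\vdiv$, and the commutator $[X,\vgrad]=-\hgrad$.

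First I would expand $\iip{Xu}{[X,\Delta]u}=\iip{Xu}{X\Delta u}-\iip{Xu}{\Delta Xu}$. Writing $\Delta=-\vdiv\vgrad$ and integrating by parts in the vertical variable, each term becomes a norm or inner product of vertical and horizontal gradients. Concretely, $\iip{Xu}{\Delta Xu}=\aabs{\vgrad Xu}^2$ directly, while the other term $\iip{Xu}{X\Delta u}$ should be manipulated, using $X^*=-X$ and then the vertical integration by parts, into something involving $\iip{\vgrad Xu}{X\vgrad u}$ or $\aabs{X\vgrad u}^2$. At this point I would invoke the commutator $X\vgrad u=\vgrad Xu-\hgrad u$ to trade the mixed term $\iip{\vgrad Xu}{X\vgrad u}$ for $\aabs{\vgrad Xu}^2-\iip{\vgrad Xu}{\hgrad u}$, and expand $\aabs{X\vgrad u}^2=\aabs{\vgrad Xu-\hgrad u}^2$. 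The hope is that after all substitutions the cross terms and the $\aabs{\vgrad Xu}^2$ contributions cancel appropriately, and the residual $\aabs{\hgrad u}^2$ together with the $(n-1)\aabs{Xu}^2$ coming from the $(n-1)X$ part of~\eqref{eq:[X,D]} reproduce the right-hand side. Alternatively, and perhaps more cleanly, I would directly verify the identity by computing $\aabs{X\vgrad u}^2-\aabs{\vgrad Xu}^2$ via $\aabs{X\vgrad u}^2=\aabs{\vgrad Xu}^2-2\iip{\vgrad Xu}{\hgrad u}+\aabs{\hgrad u}^2$, so that the left side becomes $-2\iip{\vgrad Xu}{\hgrad u}+\aabs{\hgrad u}^2+(n-1)\aabs{Xu}^2$, and then show $\iip{Xu}{[X,\Delta]u}=-2\iip{\vgrad Xu}{\hgrad u}+(n-1)\aabs{Xu}^2$, which via $[X,\Delta]=2\vdiv\hgrad+(n-1)X$ reduces to the vertical integration by parts $\iip{Xu}{2\vdiv\hgrad u}=-2\iip{\vgrad Xu}{\hgrad u}$ plus $(n-1)\iip{Xu}{Xu}$.

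The main obstacle I anticipate is not the algebra but the justification of the integrations by parts under the weak regularity hypotheses: $u$ is only Lipschitz up to $\pSM$ and $C^2$ only in the interior, so the boundary terms in the integration-by-parts formulas are not automatically absent, and the second-order quantities $\vgrad Xu$ and $X\vgrad u$ are given only as $L^2$ functions. The purely vertical integration by parts $\iip{\vgrad u}{Z}=-\iip{u}{\vdiv Z}$ has no boundary term, which is the feature being exploited here — the identity deliberately avoids the geodesic vector field producing a boundary contribution, since every appearance of $X$ is paired against a vertical operation rather than integrated by parts in $X$ on its own. I would make this rigorous by the same kind of mollification/approximation scheme used in the proof of lemma~\ref{lma:pestov-lip}: prove the identity first for $u\in C^\infty$, where all manipulations are classical, and then pass to the limit using that the hypotheses guarantee $L^2$-convergence of each quadratic term. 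The finite-degree assumption on $Xu$ and the membership $\vgrad Xu,X\vgrad u\in L^2$ are precisely what control the second-order terms in this limiting argument, so the crux is checking that the approximants converge in the norms in which each term of the identity is continuous.
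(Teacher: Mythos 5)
Your proposal is correct and coincides with the paper's own proof: the ``cleaner'' variant you describe --- expanding $\aabs{X\vgrad u}^2=\aabs{\vgrad Xu-\hgrad u}^2$, integrating by parts only vertically (so that no boundary term ever appears), and invoking $[X,\Delta]=2\vdiv\hgrad+(n-1)X$ --- is exactly the paper's computation, including the role of the finite degree of~$Xu$ in making sense of $\iip{Xu}{[X,\Delta]u}$. The only cosmetic difference is the reduction to smooth~$u$: you mollify as in lemma~\ref{lma:pestov-lip}, while the paper first passes to shrunk manifolds $M^\eps\subset\sisus M$ (where~$u$ is~$C^2$) and lets $\eps\to0$, the limit being controlled by the same $L^2$ hypotheses you cite.
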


Combining lemmas~\ref{lma:vP} and~\ref{lma:pestov-lip} gives a convenient identity.

\begin{lemma}
\label{lma:combined-pestov}
Let $(M,g,\outerbdy)$ be admissible.
If $u\in C^2(\sisus SM)\cap\Lip(SM)$, $\vgrad Xu\in L^2(SM)$, and $u=u\circ\reflect$ at $\innerbdy$ and $u=0$ at~$\outerbdy$, then
\begin{equation}
\iip{Xu}{[X,\Delta]u}
=
\iip{R\vgrad u}{\vgrad u}
-\aabs{\hgrad u}^2
+H_\innerbdy(u,u).
\end{equation}
\end{lemma}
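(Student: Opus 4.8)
The plan is to derive Lemma~\ref{lma:combined-pestov} purely by combining the two Pestov-type identities already at our disposal, namely Lemma~\ref{lma:vP} and Lemma~\ref{lma:pestov-lip}, with no further integration by parts required. First I would check that the hypotheses of the present lemma supply exactly what both of those lemmas demand. The admissibility of $(M,g,\outerbdy)$ together with the assumptions $u\in C^2(\sisus SM)\cap\Lip(SM)$, $\vgrad Xu\in L^2(SM)$, $u=u\circ\reflect$ at $\innerbdy$ and $u=0$ at $\outerbdy$ are precisely the hypotheses of Lemma~\ref{lma:pestov-lip}, so that lemma applies verbatim and yields both $X\vgrad u\in L^2(SM)$ and the identity~\eqref{eq:pestov}. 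The conclusion $X\vgrad u\in L^2(SM)$ is important, because it is one of the standing hypotheses of Lemma~\ref{lma:vP}; the remaining hypothesis of that lemma, that $Xu$ have finite degree, holds here since $Xu=-f$ and $f$ is a tensor field of finite order, hence a function of finite degree on $SM$.

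Once both identities are available, the proof is essentially algebraic. Lemma~\ref{lma:pestov-lip} gives
\begin{equation}
\aabs{\vgrad Xu}^2
=
\aabs{X\vgrad u}^2
-\iip{R\vgrad u}{\vgrad u}
+(n-1)\aabs{Xu}^2
-H_\innerbdy(u,u),
\end{equation}
while Lemma~\ref{lma:vP} gives
\begin{equation}
\aabs{X\vgrad u}^2
-
\aabs{\vgrad Xu}^2
+
(n-1)\aabs{Xu}^2
=
\iip{Xu}{[X,\Delta]u}
+
\aabs{\hgrad u}^2.
\end{equation}
I would rearrange the first identity to isolate the combination $\aabs{X\vgrad u}^2-\aabs{\vgrad Xu}^2+(n-1)\aabs{Xu}^2$, obtaining
\begin{equation}
\aabs{X\vgrad u}^2
-
\aabs{\vgrad Xu}^2
+(n-1)\aabs{Xu}^2
=
\iip{R\vgrad u}{\vgrad u}
+H_\innerbdy(u,u).
\end{equation}
The left-hand side of this and the left-hand side of the second identity are identical, so equating their right-hand sides yields
\begin{equation}
\iip{Xu}{[X,\Delta]u}
+
\aabs{\hgrad u}^2
=
\iip{R\vgrad u}{\vgrad u}
+H_\innerbdy(u,u),
\end{equation}
and moving $\aabs{\hgrad u}^2$ to the other side gives exactly the claimed identity.

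I do not anticipate a genuine obstacle in this argument, since it is a two-line subtraction of established identities; the only real work is the bookkeeping check that the hypotheses line up. The one point that deserves explicit verification is that the regularity outputs and inputs match: I must confirm that $X\vgrad u\in L^2(SM)$, which is not assumed here but is needed to invoke Lemma~\ref{lma:vP}, is indeed produced by Lemma~\ref{lma:pestov-lip}, and that the finite-degree hypothesis on $Xu$ follows from the tensorial nature of $f$. Both are immediate, so the real content of the lemma is simply that the two Pestov identities, derived by very different routes (one by direct integration by parts with boundary terms, the other incorporating the commutator $[X,\Delta]$), are compatible and combine to isolate the curvature, horizontal-gradient, and boundary terms. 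I would therefore present the proof as a short computation, noting at the outset which lemma supplies each hypothesis of the other.
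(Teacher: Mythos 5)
Your proof is correct and is essentially identical to the paper's: the paper's own proof of this lemma is the single sentence that it ``follows by combining lemmas~\ref{lma:pestov-lip} and~\ref{lma:vP}'', with the subtraction you write out left implicit. Your explicit check of the finite-degree hypothesis of Lemma~\ref{lma:vP} --- which is not actually among the stated hypotheses of Lemma~\ref{lma:combined-pestov} and must be supplied from context (via $Xu=-f$ here, or $Xw=-g_m$ in the paper's application to the truncated function) --- is a point of care the paper glosses over, and your justification is the right one.
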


%The final missing piece is the projection of a commutator to degree $m$.
The final missing piece is projecting a commutator to degree $m$.

\begin{lemma}
\label{lma:comm-proj}
If $u\in C^2(\sisus SM)$ satisfies $u_{m-1}=0$,
\ntr{Changed the condition. The assumption was ``has degree $m+1$'', but with $u_{m-1}=0$ the claim and particularly its use is clearer.}
then
\begin{equation}
([X,\Delta]u)_m
=
(2m+n-1)(Xu)_m.
\end{equation}
\end{lemma}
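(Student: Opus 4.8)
The plan is to exploit two structural facts: the geodesic vector field $X=X_++X_-$ shifts spherical-harmonic degree by $\pm1$, while the vertical Laplacian $\Delta$ preserves degree, acting as the scalar $k(k+n-2)$ on $\Lambda^k$. Consequently $[X,\Delta]$ maps $\Lambda^k$ into $\Lambda^{k-1}\oplus\Lambda^{k+1}$, so the degree-$m$ component $([X,\Delta]u)_m$ can receive contributions only from $u_{m-1}$ and $u_{m+1}$. This observation immediately pinpoints the role of the hypothesis $u_{m-1}=0$.

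First I would compute the action of $[X,\Delta]$ on a single eigenspace. For $u_k\in\Lambda^k$, using $\Delta u_k=k(k+n-2)u_k$ together with $X_\pm u_k\in\Lambda^{k\pm1}$ and their eigenvalues $(k\pm1)(k\pm1+n-2)$, the direct subtraction $X\Delta u_k-\Delta Xu_k$ yields
\[
[X,\Delta]u_k=-(2k+n-1)X_+u_k+(2k+n-3)X_-u_k.
\]
One can cross-check this against the stated identity $[X,\Delta]=2\vdiv\hgrad+(n-1)X$ from~\eqref{eq:[X,D]}, but the eigenvalue computation is self-contained and avoids having to analyse $\vdiv\hgrad$ degree by degree.

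Next I would assemble the degree-$m$ part. Since $X_+u_k$ has degree $k+1$ and $X_-u_k$ has degree $k-1$, the only terms landing in degree $m$ are the $X_+$-image of $u_{m-1}$ and the $X_-$-image of $u_{m+1}$, giving
\[
([X,\Delta]u)_m=-(2m+n-3)X_+u_{m-1}+(2m+n-1)X_-u_{m+1}.
\]
Imposing $u_{m-1}=0$ annihilates the first term. The same degree bookkeeping applied to $X=X_++X_-$ gives $(Xu)_m=X_+u_{m-1}+X_-u_{m+1}=X_-u_{m+1}$ once $u_{m-1}=0$, and comparing the two displays produces exactly $([X,\Delta]u)_m=(2m+n-1)(Xu)_m$.

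The computation is essentially routine; the only point needing a little care is justifying the termwise application of $[X,\Delta]$ to the spherical-harmonic expansion of a merely $C^2$ function. This is harmless here, because the degree-$m$ output depends only on the two components $u_{m-1}$ and $u_{m+1}$, so no convergence issue arises: once the degrees are tracked, the identity is a finite, fiberwise statement.
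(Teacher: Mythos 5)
Your proposal is correct and takes essentially the same route as the paper: both arguments combine the eigenvalues $k(k+n-2)$ of $\Delta$ on $\Lambda^k$ with the degree-shifting property of $X_\pm$ to see that only $X_-u_{m+1}$ contributes to $([X,\Delta]u)_m$ once $u_{m-1}=0$, producing the factor $(2m+n-1)$. The lone difference is cosmetic: for the $C^2$ regularity point the paper uses a smooth approximation argument (noting $[X,\Delta]$ is second order, since the fiberwise projection does not commute with the non-fiberwise operator $X$ entirely for free), whereas you compute the general commutator formula on each $\Lambda^k$ and invoke finiteness of the relevant components --- either justification works.
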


\begin{proof}[Second proof of theorem~\ref{thm:tensor}]
We define the function $u$ as in the first proof and observe that by construction $u=u\circ\reflect$ at~$\innerbdy$.
\ntr{Added this observation of reflection property here, as it is used in proposition~\ref{prop:killing1}.}
We define $w\colon SM\to\R$ by
\begin{equation}
w
=
\sum_{k\geq m}u_k
=
u_{m+1}+u_{m+3}+\dots,
\end{equation}
where we have used the fact that every other degree term of~$u$ vanishes by parity considerations.
The goal is to show that $w=0$.

The transport equation $Xu=-f$ gives $(Xw)_k=-f_k=0$ for $k>m$.
Since~$w$ only contains degrees $m+1$ and higher, $(Xw)_k=0$ for $k<m$.
The remaining term is $(Xw)_m=-f_m-X_{+}u_{m-1}\eqqcolon-g_m$, whence $Xw=-g_m$.
\ntr{Proof adjusted slightly to include $Xw=-f_m-X_+u_{m-1}$. It is enough that this is of order $m$.}

The functions~$u$ and $u_{m-1},u_{m-3},\dots$ have regularity properties due to lemmas~\ref{lma:u-reg}, \ref{lma:uk-reg}, and~\ref{lma:L2-reg}.
Therefore $w\in C^2(\sisus SM)\cap\Lip(SM)$, $\vgrad Xw\in L^2(SM)$, $w=w\circ\reflect$ at $\innerbdy$, and $w=0$ at~$\outerbdy$.
Applying lemma~\ref{lma:combined-pestov} to~$w$ and using signs of curvature, we obtain
\begin{equation}
\iip{Xw}{[X,\Delta]w}
\leq
0.
\end{equation}
But $Xw=-g_m$ has only degree~$m$ and the different degrees are orthogonal, so
\begin{equation}
\iip{Xw}{[X,\Delta]w}
=
-\iip{g_m}{([X,\Delta]w)_m}.
\end{equation}
Lemma~\ref{lma:comm-proj} gives us $([X,\Delta]w)_m=-(2m+n-1)g_m$ since $w_{m-1}=0$ and~$w_k$ for\ntr{Changed $n$ to $m$.} $k\geq m+2$ does not affect $([X,\Delta]w)_m$.
Thus
\begin{equation}
(2m+n-1)\aabs{g_m}^2
\leq
0.
\end{equation}
Therefore $Xw=-g_m=0$.

The function~$w$ is invariant under the geodesic flow and reflections at~$\innerbdy$, so it is constant along every broken ray.
It vanishes at~$\outerbdy$, so by admissibility of the geometry $w=0$.
\end{proof}

In the proof we applied the Pestov identity to the function~$w$, not~$u$ or~$u_k$.

\subsection{Proofs of lemmas}
\label{sec:alt-lma-pf}

To complete the second proof of theorem~\ref{thm:tensor}, we prove lemmas~\ref{lma:vP}, \ref{lma:combined-pestov}, and~\ref{lma:comm-proj}.

\begin{proof}[Proof of lemma~\ref{lma:vP}]
We prove the lemma on a shrinked manifold $M^\eps\subset\sisus M$ for which~$\partial M^\eps$ is at distance $\eps>0$ from~$\partial M$ at every point.
By assumption we have $X\vgrad u,\vgrad Xu,Xu,\hgrad u\in L^2(SM)$.
Since $\aabs{F}_{L^2(SM^\eps)}\to\aabs{F}_{L^2(SM)}$ for any $F\in L^2(SM)$, the limit is well-behaved and the inner product $\iip{Xu}{[X,\Delta]u}$ must also exist on the whole sphere bundle~$SM$.

Since~$Xu$ has finite degree, so does $[X,\Delta]u$.
As argued in the proof of lemma~\ref{lma:L2-reg}, vertical derivatives do not change integrability and it follows that $[X,\Delta]u\in L^2(SM^\eps)$.
Thus by a simple approximation argument it is enough to prove the statement for smooth functions~$u$.

We thus assume $u\in C^\infty(SM)$.
Using a commutator formula and vertical integration\ntr{Fixed a spelling error.} by parts gives
\begin{equation}
%\label{eq:}
%\begin{split}
\aabs{X\vgrad u}^2
%&
=
\aabs{\vgrad Xu-\hgrad u}^2
%\\&
=
\aabs{\vgrad Xu}^2
+
\aabs{\hgrad u}^2
+2\iip{Xu}{\vdiv\hgrad u}
.
%\end{split}
\end{equation}
Applying the commutator formula~\eqref{eq:[X,D]} then gives the desired identity.
\end{proof}

The obtained identity is similar to~\eqref{eq:vv2} and~\cite[Proof of propsition 3.4]{PSU:Beurling}.

\begin{proof}[Proof of lemma~\ref{lma:combined-pestov}]
As stated above, this follows by combining lemmas~\ref{lma:pestov-lip} and~\ref{lma:vP}.
\end{proof}

\begin{proof}[Proof of lemma~\ref{lma:comm-proj}]
The commutator $[X,\Delta]$ is a second order differential operator, so the claimed identity is preserved under~$C^2$ limits locally in the base and globally in the fiber.
Therefore it suffices to prove the statement for $u\in C^\infty(SM)$.

Since $u_{m-1}=0$, we have
\begin{equation}
([X,\Delta]u)_m
=
X_-\Delta u_{m+1}
-\Delta X_-u_{m+1}.
\end{equation}
Functions of degree~$k$ are eigenfunctions of~$\Delta$ with eigenvalue $k(k+n-2)$, so
\begin{equation}
\begin{split}
\Delta u_{m+1}&=(m+1)(m+n-1)u_{m+1},
\quad\text{and}\\
\Delta X_-u_{m+1}&=m(m+n-2)X_-u_{m+1}.
\end{split}
\end{equation}
This leads to
\begin{equation}
([X,\Delta]u)_m
=
(2m+n-1)X_-u_{m+1}.
\end{equation}
Since $u_{m-1}=0$, this is the claimed identity.
\end{proof}

\subsection{Conformal Killing tensors}
\label{sec:killing}

The second method of proof can also be used to study conformal Killing tensor fields.

A tensor field of rank~$m$ may be regarded as function on the sphere bundle containing only degrees~$m$, $m-2$, and so on.
The tensor field is called trace-free if it only contains the top degree, that is, if the corresponding function on~$SM$ has degree~$m$.
As mentioned in the proof lemma~\ref{lma:X+inj}, via this identification, a trace-free conformal Killing tensor field of rank~$m$ is a function $u\in C^\infty(SM)$ of degree~$m$ for which $X_+u=0$.
That is, the trace-free conformal Killing tensors constitute precisely the kernel of the operator~$X_+$.
For details, we refer the reader to~\cite{GK:n,DS:Killing}.

We needed an injectivity result for~$X_+$, and this was stated in lemma~\ref{lma:X+inj}.
Using the methods of the second proof of our main result, we present an alternative proof of injectivity of~$X_+$.
The following proposition could be used as a substitute for lemma~\ref{lma:X+inj} in our first proof of theorem~\ref{thm:tensor}.

\begin{proposition}
\label{prop:killing1}
Assume that $(m,g,\outerbdy)$ is admissible.
Suppose $u\in C^2(\sisus SM)\cap\Lip(SM)$ has degree~$m$, and satisfies $u=0$ at $\outerbdy$ and $u=u\circ\reflect$ at~$\innerbdy$.
If $X_+u=0$, then $u=0$.

In other words, there are no non-trivial trace-free conformal Killing tensors satisfying these boundary conditions at~$\outerbdy$ and~$\innerbdy$.
\end{proposition}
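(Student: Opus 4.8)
The plan is to run the argument of the second proof of theorem~\ref{thm:tensor} directly on $u$ itself, exploiting that the Killing condition $X_+u=0$ forces $Xu$ to sit in a single spherical harmonic degree. First I would observe that, since $u$ has degree~$m$ and $X_+u=0$, the transport derivative reduces to $Xu=X_-u$, a function of degree $m-1$; in particular $Xu$ has finite degree. Because $u\in\Lip(SM)$ gives $Xu\in L^\infty(SM)\subset L^2(SM)$ and $Xu$ has the fixed degree $m-1$, the eigenvalue identity~\eqref{eq:vgrad-uk} yields $\aabs{\vgrad Xu}^2=(m-1)(m+n-3)\aabs{Xu}^2<\infty$, so $\vgrad Xu\in L^2(SM)$. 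Together with the hypotheses $u\in C^2(\sisus SM)\cap\Lip(SM)$, $u=0$ at $\outerbdy$ and $u=u\circ\reflect$ at $\innerbdy$, this is precisely what is needed to apply lemma~\ref{lma:combined-pestov}.

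Applying lemma~\ref{lma:combined-pestov} to $u$ gives
\begin{equation}
\iip{Xu}{[X,\Delta]u}
=
\iip{R\vgrad u}{\vgrad u}
-\aabs{\hgrad u}^2
+H_\innerbdy(u,u).
\end{equation}
Every term on the right is non-positive: $\iip{R\vgrad u}{\vgrad u}\le0$ by non-positive sectional curvature, $H_\innerbdy(u,u)\le0$ by concavity of $\innerbdy$, and $-\aabs{\hgrad u}^2\le0$ trivially, so $\iip{Xu}{[X,\Delta]u}\le0$. Next I would compute the commutator explicitly, as an analogue of lemma~\ref{lma:comm-proj} specialized to the single relevant degree. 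Since $u$ has degree $m$ we have $\Delta u=m(m+n-2)u$, whence $X\Delta u=m(m+n-2)Xu$; and since $Xu=X_-u$ has degree $m-1$ we have $\Delta Xu=(m-1)(m+n-3)Xu$. Subtracting gives
\begin{equation}
[X,\Delta]u=\bigl(m(m+n-2)-(m-1)(m+n-3)\bigr)Xu=(2m+n-3)Xu.
\end{equation}

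Combining the two displays yields $(2m+n-3)\aabs{Xu}^2\le0$. As the coefficient $2m+n-3$ is positive whenever $2m+n>3$, which holds for all $m\ge2$ and in particular for $m\ge3$ where this proposition is used in place of lemma~\ref{lma:X+inj}, we conclude $Xu=X_-u=0$. Together with $X_+u=0$ this gives $Xu=0$, so $u$ is invariant under the geodesic flow; the reflection condition $u=u\circ\reflect$ at $\innerbdy$ then makes $u$ constant along every broken ray. Since $u$ vanishes at $\outerbdy$ and, by admissibility, every broken ray reaches $\outerbdy$ in finite time, this forces $u=0$, exactly as at the end of the second proof of theorem~\ref{thm:tensor}.

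The heavy lifting is done by lemma~\ref{lma:combined-pestov}, and the only genuinely new point is that $[X,\Delta]u$ collapses to a \emph{positive} multiple of $Xu$, whose sign is opposed to that of the non-positive right-hand side of the combined identity. The step I would watch most carefully is the regularity bookkeeping required to legitimately invoke lemma~\ref{lma:combined-pestov}, namely confirming $\vgrad Xu\in L^2(SM)$ and that $Xu$ has finite degree; both are immediate here precisely because the Killing condition places $Xu$ in the single degree $m-1$, so no approximation beyond what is already internal to lemmas~\ref{lma:vP} and~\ref{lma:pestov-lip} is needed.
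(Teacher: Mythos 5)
Your argument reproduces the paper's own proof of proposition~\ref{prop:killing1} almost step for step: the same regularity bookkeeping ($Xu=X_-u$ lies in the single degree $m-1$, hence $\vgrad Xu\in L^2(SM)$ via~\eqref{eq:vgrad-uk}), the same application of lemma~\ref{lma:combined-pestov} with the same sign considerations, and the same collapse of the commutator to $(2m+n-3)Xu$. Where the paper invokes lemma~\ref{lma:comm-proj} with $m$ replaced by $m-1$, you compute $[X,\Delta]u$ directly from the two vertical eigenvalues; since $u$ has a single degree this is the identical calculation, and in fact slightly cleaner because you get the full commutator, not only its degree-$(m-1)$ projection. The concluding flow-invariance argument also matches the paper.

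The one genuine omission is the degenerate case $2m+n-3\le0$. The proposition is stated for every degree $m\ge0$, but your final step only yields $Xu=0$ when the coefficient $2m+n-3$ is strictly positive: for $m=0$, $n=3$ the combined inequality reads $0\le0$, and for $m=0$, $n=2$ it reads $-\aabs{Xu}^2\le0$, neither of which carries any information. Your remark that the proposition is only needed for $m\ge3$ restricts the statement rather than proving it as written. The paper closes this case in one line: if $2m+n-3\le0$ then necessarily $m=0$, so $u$ has degree zero and $X_-u=0$ automatically, whence $Xu=X_+u=0$ by hypothesis and the same flow-invariance argument gives $u=0$. Adding that sentence makes your proof complete and coextensive with the paper's.
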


\begin{proof}
Assume first that $2m+n-3>0$.

As argued in the proof of lemma~\ref{lma:L2-reg}, it follows from the given regularity assumptions and having a single degree that $\vgrad Xu\in L^2(SM)$.
Hence lemmas~\ref{lma:pestov-lip} and~\ref{lma:vP} are available, and therefore so is lemma~\ref{lma:combined-pestov}.
We find
\begin{equation}
\iip{Xu}{[X,\Delta]u}
\leq
0.
\end{equation}
As $X_+u=0$, we have $Xu=X_-u\in\Lambda^{m-1}$.
We can thus apply lemma~\ref{lma:comm-proj} with $m$ replaced by $m-1$ to obtain
\begin{equation}
([X,\Delta]u)_{m-1}
=
(2m+n-3)Xu
\end{equation}
and so
\begin{equation}
\iip{Xu}{[X,\Delta]u}
=
(2m+n-3)\aabs{Xu}^2
\leq
0,
\end{equation}
implying $Xu=0$.
Now~$u$ is constant along every broken ray and vanishes at~$\outerbdy$, so $u=0$ as claimed.

If $2m+n-3\leq0$, then $m=0$ and $n=2,3$.
Therefore we then consider the case $m=0$.
Now $X_-u=0$, and so $Xu=X_+u=0$, and by the same argument $u=0$.
\end{proof}

A similar proof also provides a result without reflections.
The result is not new (cf.~\cite{DS:Killing}), but we record it for the sake of having a new and simple proof.

\begin{proposition}
\label{prop:killing2}
Let~$M$ be a compact Riemannian manifold with boundary.
Let $\outerbdy\subset\partial M$ be such that any point in~$M$ can be reached by a geodesic that meets~$\outerbdy$ transversally.
If a trace-free conformal Killing tensor $u\in C^2(\sisus M)\cap\Lip(M)$ of any order $m\geq0$ vanishes at~$\outerbdy$, then $u=0$.
\end{proposition}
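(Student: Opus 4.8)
The plan is to adapt the argument of Proposition~\ref{prop:killing1} to the setting without reflecting obstacles, where the boundary terms simplify drastically. First I would note that the hypotheses are exactly those of Proposition~\ref{prop:killing1} with the reflector removed: here the reflection condition $u=u\circ\reflect$ at~$\innerbdy$ is vacuous because there is no~$\innerbdy$, and correspondingly the boundary term $H_\innerbdy(u,u)$ disappears. The combined Pestov identity of lemma~\ref{lma:combined-pestov} was stated under the admissibility hypotheses, but its derivation rests only on lemmas~\ref{lma:pestov-lip} and~\ref{lma:vP}. Since the curvature sign and the concavity of~$\innerbdy$ are used only to discard boundary and curvature terms with a favorable sign, I would first check that in the present geometry one still obtains, for a trace-free conformal Killing tensor $u$ vanishing at~$\outerbdy$, the inequality $\iip{Xu}{[X,\Delta]u}\leq 0$. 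The key simplification is that the boundary term now lives entirely on~$\outerbdy$ where $u=0$, so it contributes nothing, and one does not even need a curvature sign if the right-hand side of the Pestov identity of lemma~\ref{lma:vP} can be arranged to have the correct sign purely through the degree structure.

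Next I would exploit the single-degree structure exactly as in Proposition~\ref{prop:killing1}. Since $u$ has degree~$m$ and $X_+u=0$, the transport term reduces to $Xu=X_-u\in\Lambda^{m-1}$, a pure degree-$(m-1)$ object. Applying lemma~\ref{lma:comm-proj} with $m$ replaced by $m-1$ (legitimate because $u$ has a single degree, so $u_{m-2}=0$) yields the eigenvalue identity $([X,\Delta]u)_{m-1}=(2m+n-3)Xu$, and hence
\begin{equation}
\iip{Xu}{[X,\Delta]u}
=
(2m+n-3)\aabs{Xu}^2.
\end{equation}
When $2m+n-3>0$ this forces $Xu=0$. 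For the degenerate case $2m+n-3\leq 0$, which again means $m=0$ and $n\in\{2,3\}$, one handles $m=0$ separately: there $X_-u=0$ automatically, so $Xu=X_+u=0$ directly.

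Finally, from $Xu=0$ I would conclude as before: $u$ is constant along every geodesic, and by hypothesis every point of~$M$ lies on a geodesic meeting~$\outerbdy$ transversally, where $u$ vanishes; therefore $u=0$ on all of~$M$. The main obstacle I anticipate is verifying that lemma~\ref{lma:combined-pestov} is genuinely applicable here, since it was phrased for admissible $(M,g,\outerbdy)$: I would need to confirm that its proof uses admissibility only to guarantee the regularity and the $L^2$ membership of $\vgrad Xu$ (which follow here from the single-degree structure and the given $C^2\cap\Lip$ regularity, as in lemma~\ref{lma:L2-reg}) and to sign-control boundary/curvature terms that are now absent or have the favorable sign. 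Once that bookkeeping is settled, the argument is essentially a streamlined copy of Proposition~\ref{prop:killing1} with the reflector-related terms deleted.
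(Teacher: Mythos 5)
Your reduction to $Xu=0$ coincides with the paper's route (the paper's own proof literally opens with ``by the arguments of the previous proof''), including the application of lemma~\ref{lma:comm-proj} with $m$ replaced by $m-1$ and the separate treatment of the degenerate case $2m+n-3\leq0$. The genuine gap is in your final step. From $Xu=0$ you conclude that $u=0$ ``on all of~$M$'' because every point lies on a geodesic meeting~$\outerbdy$, where $u$ vanishes. For $m\geq1$ this is a non sequitur: $u$ is a function on~$SM$, not on~$M$, and flow-invariance plus vanishing over~$\outerbdy$ only gives $u(x,v_0)=0$ for the single direction $v_0\in S_xM$ tangent to the chosen geodesic --- a nonzero symmetric $m$-tensor can perfectly well vanish in one direction at each point. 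Tellingly, your argument never uses the transversality hypothesis. The paper uses it, via the implicit function theorem, to produce an open neighborhood $U\subset S_xM$ of~$v_0$ such that every geodesic with initial direction in~$U$ reaches~$\outerbdy$ in finite time, whence $u(x,v)=0$ for all $v\in U$; it then invokes the finite degree of~$u$, which makes $u(x,\dummy)$ the restriction of a polynomial and hence real-analytic on the connected sphere~$S_xM$, to upgrade vanishing on the open set~$U$ to vanishing on the entire fiber. This open-set-plus-fiberwise-analyticity step is the essential final ingredient of the paper's proof, and without it your argument establishes the proposition only for $m=0$.

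On the earlier step, the concern you flag about the applicability of lemma~\ref{lma:combined-pestov} is real but is shared with the paper: the proposition assumes no curvature sign and nothing on $\partial M\setminus\outerbdy$, so the term $\iip{R\vgrad u}{\vgrad u}$ and the Pestov boundary term over $\partial M\setminus\outerbdy$ (where $u$ need not vanish and no reflection symmetry is available) are not controlled by the stated hypotheses; your hope that the boundary term ``lives entirely on~$\outerbdy$'' is justified only when $\outerbdy=\partial M$, and the sign of $\iip{Xu}{[X,\Delta]u}$ cannot be extracted from lemma~\ref{lma:vP} ``purely through the degree structure,'' since combining lemma~\ref{lma:vP} with the eigenvalue identity yields an identity among nonnegative quantities rather than a contradiction. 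Since the paper glosses over this point in exactly the same way, I do not count it as a defect peculiar to your proposal; the substantive omission is the fiber-direction argument in the last step.
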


The condition on~$\outerbdy$ is always met if $\outerbdy=\partial M$, as any point can be connected to the boundary by a shortest geodesic and it is normal to the boundary.

\begin{proof}[Proof of proposition~\ref{prop:killing2}]
By the arguments of the previous proof we find $Xu=0$, so that~$u$ is invariant under the geodesic flow.
Fix any $x\in M$.
There is a direction $v_0\in S_xM$ so that the geodesic starting at $(x,v_0)$ meets~$\outerbdy$ transversally.
By the implicit function theorem there is a neighborhood $U\subset S_xM$ of~$v_0$ so that the geodesic starting at $(x,v)$ with $v\in U$ reaches~$\outerbdy$ in finite time.

As~$u$ is invariant under the geodesic flow and vanishes at~$\partial M$, it follows that $u(x,v)=0$ for all $v\in U$.
Since~$u$ is of finite degree, it is analytic in~$v$ and so $u(x,v)=0$ for all $v\in S_xM$.
This holds for all $x\in M$, so we may conclude that $u=0$.
\end{proof}

\section*{Acknowledgements}
Much of the research leading to this paper was completed during the inverse problems program at Institut Henri Poincar\'e in 2015 and during the first author's visits to Cambridge.
The authors thank CNRS and IHP for support and hospitality.
J.I.\ was supported by the Academy of Finland (decision 295853) and an encouragement grant from the Emil Aaltonen foundation, and he is grateful for all the hospitality at Cambridge.
G.P.P.\ was supported by an EPSRC grant (EP/R001898/1).
We are grateful to the anonymous referees for useful feedback.
\ntr{Added thanks to referees. The comments were very useful!}

\bibliographystyle{plain}

\end{document}